\newcommand{\be}{\begin{eqnarray}}
\newcommand{\ben}{\begin{eqnarray*}}
\newcommand{\en}{\end{eqnarray}}
\newcommand{\enn}{\end{eqnarray*}}
\definecolor{rot}{rgb}{1.000,0.000,0.000}
\definecolor{blue}{rgb}{0.000,0.000,1.000}
\setlist[enumerate]{leftmargin=.5in}
\setlist[itemize]{leftmargin=.5in}
\crefname{hypothesis}{Hypothesis}{Hypotheses}
\title{Uniqueness in determining binary grating profiles and refractive indices with a single incoming wave \thanks{Submitted to the editors DATE.
}
}
\author{Jianli Xiang\thanks{Three Gorges Math Research Center, College of Science, China Three Gorges University, Yichang 443002, People's Republic of China
  (\email{xiangjianli@ctgu.edu.cn}).}
\and Guanghui Hu \thanks{Corresponding author: School of Mathematical Sciences and LPMC, Nankai University, Tianjin 300071, People's Republic of China (\email{ghhu@nankai.edu.cn}).}
}
\begin{document}

\maketitle

\begin{abstract}
 We investigate inverse diffraction problems for penetrable gratings in a piecewise constant medium. In the TE polarization case, it is proved that a binary grating profile together with the refractive index beneath it can be uniquely determined by the near-field observation data incited by a single plane wave and measured on a line segment above the grating. Our approach relies on the expansion of solutions to the Helmholtz equation and the corner singularity analysis of solutions to the inhomogeneous Laplace equation with a piecewise continuous source term in a sector.  This paper also contributes to corner scattering theory for the Helmholtz equation in a special non-convex domain.
\end{abstract}

\begin{keywords}
  inverse scattering, binary grating, uniqueness, Helmholtz equation, transmission conditions.
\end{keywords}

\begin{AMS}
  35P25, 35R30, 78A46, 81U40.
\end{AMS}

\section{Introduction}

The time-harmonic scattering of acoustic, electromagnetic and elastic waves by periodic surfaces plays a role in many areas of applied physics and engineering. Optical diffraction gratings date from the nineteenth century, which have a long history since Rayleigh's work \cite{Ray1907} published in 1907. We refer to the books \cite{BL, Petit1980, Wilcox1984} for its physical and mathematical background and to \cite{AbNe, BS, CF, DF92} for earlier studies on Maxwell's equations. In the TE or TM polarization case,
well-posedness of the scattering problem has been sufficiently studied for transmission problems of the Helmhotz equation under additional conditions imposed on the incident wavenumber, scattering interface and material parameters; see e.g., \cite{Tilo-hab, BS, D93, ES98, S98}.  The inverse scattering problem of recovering an unknown grating profile from the scattered field is of great practical importance, e.g., in quality control and design of diffractive elements with prescribed far-field patterns (\cite{Bao2001, D93, ES98, SK97, Turunen1997}). Since the uniqueness issue plays a significant role in such inverse problems, the purpose of this article is to present a complete answer to the problem of recovering a penetrable rectangular grating profile together with the material parameter from near-field observations of the scattered field. It is supposed that a binary grating remains invariant along one surface direction and we consider the TE polarization case. The media divided by the grating are supposed to be piecewise homogeneous and isotropic, and the measurement data are excited by a single plane wave only.

For perfectly reflecting periodic curves, there has been many uniqueness results in the literature.  In the TE polarization case (Dirichlet boundary condition), we refer to \cite{Bao1994, Kirsch1994} for the uniqueness results with one plane wave if the background medium is lossy and using  infinitely many quasi-periodic incident waves in non-absorbing media. Hettlich and Kirsch \cite{Hettlich1997} had proved that a finite number of incident plane waves with a fixed direction and distinct frequencies are sufficient to uniquely identify a $C^2$-smooth periodic curve, provided the grating height is a priori known. This has extended Schiffer's idea from inverse scattering by bounded obstacles to periodic structures. In the special case of piecewise linear surfaces, one can obtain global uniqueness results within the class of polygonal/polyhedral grating profiles by using a minimal number of incident planes. The first result in this respect was shown in \cite{Elschner2003} within rectangular periodic structures under the Dirichlet or Neumann boundary condition. In one of the author's work \cite{Elschner2010}, all periodic polygonal structures that cannot be identified by one incident plane wave were characterized and classified. Consequently, one can get a global uniqueness with at most four incident angles for recovering polygonal periodic structures in the Rayleigh frequency case. This was inspired by the reflection principle for the Helmholtz equation with the Dirichlet or Neumann boundary condition on a straight line and the dihedral theory for classifying unidentifiable bi-periodic structures in optics \cite{Bao2014}.

Kirsch's uniqueness result \cite{Kirsch1994} was extended to penetrable periodic layers in \cite{Strycharz1999}, where the author had proved that the grating profile together with the constitutive parameters can be completely determined from the scattered waves for all quasi-periodic incident waves. Elschner and Yamamoto \cite{Elschner2004} proved that multi-frequency near-field measurements can uniquely determine a penetrable grating profile in a piecewise constant medium. If the grating height is a priori known, a finite number of frequencies are sufficient to imply uniqueness. This can be considered as another extension of Schiffer's idea to periodic structures, in addition to the aforementioned work \cite{Hettlich1997}. Note that the measurements in \cite{Elschner2004,Strycharz1999} must be taken both above and below the periodic structure. Yang and Zhang \cite{Yang2012} showed that a smooth dielectric grating interface can be uniquely recovered by the scattered field measured only on  above the grating. Their proof is mainly based on the analogue of mixed reciprocity relation in periodic structures.

In this paper, we restrict our discussions to penetrable periodic surfaces of rectangular type in a piecewise constant medium in $\mathbb{R}^{2}$. Binary gratings have many applications in industry, because they can be easily fabricated \cite{SK97}.  There are two features of our uniqueness result. i) The measurement data are taken above the grating only and are excited by a single plane wave with an arbitrarily fixed direction and frequency. With one incoming wave, the inverse problem becomes more ill-posed and is thus more challenging. ii) Not only the binary grating profile but also the material parameter can be uniquely recovered, due to the delicate singularity analysis around a corner point. From numerical point of view,  our result ensures the existence of a unique global minimizer in the optimal design of penetrable binary gratings  with a constant refractive index (see e.g., \cite{D93, ES98}) from prescribed/measured near-field data.

It should be remarked that the uniqueness proof for perfectly reflecting surfaces (\cite{Bao2014, Elschner2010, Elschner2003}) cannot be applied to penetrable gratings, due to the lack of a corresponding reflection principle for treating the transmission conditions.   Our approach to the uniqueness is based on the expansion of analytic solutions to the Helmholtz equation and the corner singularity analysis of solutions to the inhomogeneous Laplace equation in weighted H\"older spaces. This is motivated by the recent scattering theory for bounded (non-periodic) inhomogeneous media with a singularity on the contrast support and for polygonal source terms (see e.g. \cite{ BPJ, Elschner2018, Hu2020, KS, PSV17}). However, the corner scattering theory applies only to convex domains so far. In this paper, we need to consider two distinct rectangular structures with the same corners, which bring essential difficulties as in justifying the corner scattering theory in a non-convex domain. Thanks to the rectangular nature of the scattering surface, we can adapt the singularity analysis performed in (\cite{Elschner2018}) to penetrable grating structures with right angles. Moreover, since the corner  singularity of the wave fields  relies heavily on material parameters, we prove that the constant refractive index beneath the grating can be uniquely identified once the grating profile has been recovered.

The rest of the paper is organized as follows. In section \ref{sec2}, mathematical formulations and main results are presented for grating diffraction problems in the TE polarization case. In section \ref{sec3}, we give some preliminaries and prepare several important lemmas for the uniqueness result. Sections \ref{Shape} and \ref{medium} are devoted to uniqueness proofs for shape identification and medium recovery, respectively. In the appendix, we present a proof to the well-posedness of forward scattering problem under more general transmission conditions. Finally, some concluding remarks will be made in section \ref{appendix}.

\section{Mathematical formulation and main result}\label{sec2}

Consider the TE-polarization of time-harmonic electromagnetic scattering of a plane wave from a penetrable binary grating which remains invariant along one surface direction $x_3$.  The media separated by the grating are supposed to be piecewise constant and non-absorbing. In two dimensions,  the cross-section $\Lambda$ of the grating surface in the $ox_1x_2$-plane is of rectangular type, i.e.,  neighboring line segments are always perpendicular to the $x_1$- and $x_2$- axies. More precisely, define a set $\mathcal{A}$ of all possible grating profiles by:
\begin{align*}
\mathcal{A}=\big\{\Lambda~|~&\Lambda\mbox{ is a non-self-intersecting  curve in $\mathbb{R}^{2}$ which is $2\pi$-periodic  in $x_1$. } \\ &\mbox{$\Lambda$ is piecewise linear and any linear part is parallel to the $x_{1}$- or $x_{2}$-axis} \big\},
\end{align*}
then we call a piecewise linear curve $\Lambda\in\mathcal{A}$ a rectangular profile (see the following Figure \ref{fig0}).

Denote by $\Omega_{\Lambda}^{+}$ ($\Omega_{\Lambda}^{-}$) the unbounded periodic domain over (below) $\Lambda$, that is the component of $\mathbb{R}^{2}$ separated by $\Lambda$ which is connected to $x_{2}=+\infty$ ($x_{2}=-\infty$). Let $\nu=(\nu_1, \nu_2)\in \mathbb{S}:=\{x\in \mathbb{R}^2: |x|=1\}$ be the normal direction at $\Lambda$ pointing into $\Omega^+_\Lambda$. We always suppose that $\nu_2\geq 0$, which is equivalent to the geometrical condition that
\begin{equation}\label{gc}
(x_1, x_2)\in \Omega^+_\Lambda\quad \Rightarrow\quad (x_1, x_2+s)\in \Omega^+_\Lambda\quad\mbox{for all}\quad s>0.
\end{equation}
The condition \eqref{gc} has been used in \cite{CM} for proving well-posedness of rough surface scattering problems with the Dirichlet boundary condition.
Suppose that a plane wave in the $(x_{1},x_{2})$-plane given by
\begin{equation*}
u^{i}(x_{1},x_{2})=e^{i\alpha x_{1}-i\beta x_{2}}, \quad \alpha=k_{1}\sin\theta, \quad \beta=k_{1}\cos\theta
\end{equation*}
with some incident angle $\theta\in(-\pi/2, \pi/2)$ and wave number $k_{1}>0$, is incident upon the grating $\Lambda$ from the top. Then the direct transmission scattering problem is to find the total field $u=u(x_{1},x_{2})$ such that
\begin{equation} \label{a}
\left\{\begin{array}{lll}
 \Delta u+k_{1}^{2}u=0,& \quad \mbox{in} \quad \Omega_{\Lambda}^{+}, \vspace{0.3cm} \\
 \Delta u+k_{2}^{2}u=0,& \quad \mbox{in} \quad \Omega_{\Lambda}^{-}, \vspace{0.3cm} \\
 \big[u\big]=\big[\frac{\partial u}{\partial\nu}\big]=0,  & \quad \mbox{on}\quad \Lambda,\vspace{0.3cm} \\
 u=u^{i}+u^{s},& \quad \mbox{in} \quad \Omega_{\Lambda}^{+},
 \end{array}\right.
\end{equation}
with the following radiation conditions as $x_{2}\rightarrow\pm\infty$:
\begin{equation} \label{rad1}
u^{s}=\sum_{n\in\mathbb{Z}}A_{n}^{+}~e^{i\alpha_{n}x_{1}+i\beta_{n}^{+}x_{2}},\quad \mbox{for}~x_{2}>\Lambda^{+}:=\max_{(x_{1},x_{2})\in\Lambda}x_{2},
\end{equation}
\begin{equation} \label{rad2}
u=\sum_{n\in\mathbb{Z}}A_{n}^{-}~e^{i\alpha_{n}x_{1}-i\beta_{n}^{-}x_{2}},\quad \mbox{for }~x_{2}<\Lambda^{-}:=\min_{(x_{1},x_{2})\in\Lambda}x_{2},
\end{equation}
where $\alpha_{n}:=n+\alpha$ and
\begin{equation*}
\beta_{n}^{+}:=\left\{\begin{array}{lll}
\sqrt{k_{1}^{2}-\alpha_{n}^{2}}\quad &\mbox{if}~~|\alpha_{n}|\leq k_{1},\vspace{0.3cm}\\
i\sqrt{\alpha_{n}^{2}-k_{1}^{2}} \quad &\mbox{if}~~|\alpha_{n}|> k_{1};
\end{array}\right. \quad\quad
\beta_{n}^{-}:=\left\{\begin{array}{lll}
\sqrt{k_{2}^{2}-\alpha_{n}^{2}}\quad &\mbox{if}~~|\alpha_{n}|\leq k_{2},\vspace{0.3cm}\\
i\sqrt{\alpha_{n}^{2}-k_{2}^{2}} \quad &\mbox{if}~~|\alpha_{n}|> k_{2}.
\end{array}\right.
\end{equation*}
In \eqref{a}, the notation $[\cdot]$ stands for the jumps of $u$ and $\partial_\nu u$ on the grating interface $\Lambda$. The expansions in (\ref{rad1}) and (\ref{rad2}) are the well-known Rayleigh expansions (see e.g. \cite{AbNe, DF92, Kirsch1993, Ray1907}); $A_{n}^{\pm}\in\mathbb{C}$ are called the Rayleigh coefficients. Throughout this paper we suppose that $k_2>0$ and $k_2\neq k_1$.
The series \eqref{rad1} and \eqref{rad2} together with their derivatives are uniform convergent in any compact set in $x_2>\Lambda^+$ and $x_2<\Lambda^-$, respectively, because
$u\in H_\alpha^1(S_H)$ (see below for the definition) and
the scattered and transmitted fields consist of infinitely many surface waves which exponentially decay as $x_2\rightarrow \pm \infty$.

\begin{figure}[h] \label{fig0}
 \begin{center}
  \includegraphics[width=7cm,height=6cm]{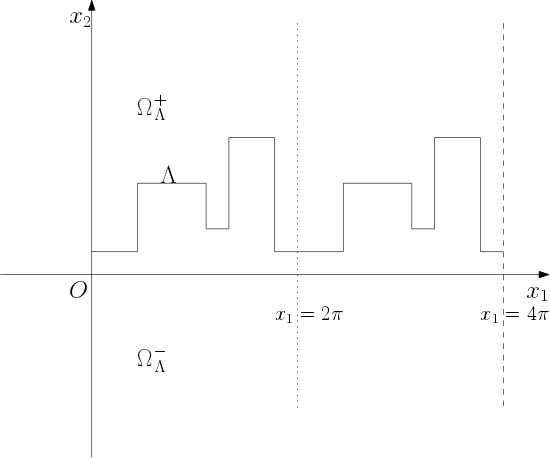}
  \end{center}
  \caption{Rectangular periodic structures.}
\end{figure}

Well-posedness of the above scattering problem \eqref{a}--\eqref{rad2} can be justified via standard variational arguments for weak solutions in the $\alpha$-quasiperiodic Sobolev space
\begin{equation*}
H^1_\alpha(S_H):=\big\{u\in H^1_{loc}(S_H),~e^{-i\alpha x_1}u\;\mbox{is $2\pi$-periodic in $x_1$}\big\},
\end{equation*}
with $S_H:=\{x\in \mathbb{R}^2: |x_2|<H\}$ for any $H>\max\{|\Lambda^+|, |\Lambda^-|\}$; see the appendix for the proof. In particular, uniqueness follows from Rellich's identifies with the factor $(x_2-c)\partial_2 \overline{u}$ for some $c\in \mathbb{R}$ applied to $S_H$, under the conditions that $k_2\neq k_1$ and the second component of the normal direction on $\Lambda$ is non-negative. In the literature (see \cite[Theorem 2.40]{Tilo-hab} and \cite{S98}), uniqueness was proved for interfaces given by a H\"older continuous graph, which can be weakened to the class of rectangular penetrable gratings considered in this paper.

Now we formulate the inverse problem with a single measurement data above the grating as follows. Let $b>\Lambda^{+}$ be a fixed constant and suppose $u=u(x_{1},x_{2})$ is a solution to the direct problem (\ref{a})--(\ref{rad2}). Determine the periodic interface $\Lambda\in\mathcal{A}$ from  knowledge of the near-field data $u(x_{1},b)$ for all $0<x_{1}<2\pi$.

The aim of this paper is to prove uniqueness in recovering a penetrable rectangular grating profile $\Lambda\in\mathcal{A}$ and the constant material parameter $k_2$ beneath $\Lambda$ with the arbitrarily fixed incident direction $\theta\in(-\pi/2, \pi/2)$ and wave number $k_{1}>0$. For brevity we denote by $(\Lambda, k_2)$ the shape and refractive index to be recovered. We are ready to state the main uniqueness result.
\begin{theorem} \label{Main}
Let $(\Lambda_{1}, k_{1,2}),(\Lambda_{2}, k_{2,2})$ be two penetrable rectangular gratings such that
\begin{itemize}
\item[(i)] $\Lambda_1, \Lambda_2\in \mathcal{A}$;
\item[(ii)] either $k_{1,2}>k_1>0$, $k_{2,2}>k_1>0$, or $0<k_{1,2}<k_1$, $0<k_{2,2}<k_1$.
\end{itemize}
Let $u_{1}$, $u_{2}$ be the unique solutions to the direct diffraction problem (\ref{a})--(\ref{rad2}) for $(\Lambda_{1}, k_{1,2})$, $(\Lambda_{2}, k_{2,2})$, respectively. If
\begin{equation}
u_{1}(x_{1},b)=u_{2}(x_{1},b)\quad \mbox{for all }x_{1}\in(0,2\pi),
\end{equation}
where $b>\max\{\Lambda_{1}^{+},\Lambda^{+}_{2}\}$ is a fixed constant, then $\Lambda_{1}=\Lambda_{2}$ and $k_{1,2}=k_{2,2}$.
\end{theorem}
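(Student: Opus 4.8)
The plan is to argue by contradiction: suppose $\Lambda_1 \neq \Lambda_2$ (or $\Lambda_1 = \Lambda_2$ but $k_{1,2}\neq k_{2,2}$), and derive a contradiction from the coincidence of the near-field data on the line $x_2=b$. First I would exploit analyticity and the Rayleigh expansion \eqref{rad1}: since $u_1(\cdot,b)=u_2(\cdot,b)$ and both scattered fields are $\alpha$-quasiperiodic solutions of the Helmholtz equation with wavenumber $k_1$ in the half-plane $x_2>\max\{\Lambda_1^+,\Lambda_2^+\}$, their Rayleigh coefficients $A_n^+$ agree, hence $u_1^s = u_2^s$ throughout the common upper region, and by unique continuation $u_1 = u_2$ in the connected open set $\Omega_{\Lambda_1}^+ \cap \Omega_{\Lambda_2}^+$ that is linked to $x_2=+\infty$. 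Call this common field $u$; it solves $\Delta u + k_1^2 u = 0$ on both sides of each profile locally, in the region where the two configurations see the ``outer'' medium.

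Next I would locate a corner point that distinguishes the two geometries. Because $\Lambda_1,\Lambda_2\in\mathcal{A}$ are rectangular and distinct (or share the same profile with different indices), there is a vertex $x^*$ of one profile, say $\Lambda_1$, at which — in a small disc $B_\delta(x^*)$ — the medium configurations of the two problems differ: on one side of a right-angled sector at $x^*$ the total field satisfies $\Delta u + k_1^2 u = 0$ for one problem and $\Delta u + k_{j,2}^2 u = 0$ for the other, so that $u$ solves $\Delta u + k_1^2 u = 0$ in the sector from the $\Lambda_1$-side and simultaneously, because $u_1=u_2$ there, $u$ must also be the restriction of the $\Lambda_2$-field. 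Setting $v := u_1 - u_2$ away from the overlap and using the transmission conditions $[u]=[\partial_\nu u]=0$ on each $\Lambda_j$, one gets that the jump of $(\Delta + k_1^2)$ versus $(\Delta + k_{j,2}^2)$ forces, near $x^*$, an identity of the schematic form $\Delta w = (k_{j,2}^2 - k_1^2)\, u\, \chi_{\text{sector}} + \text{(smoother)}$, i.e.\ $w$ is harmonic up to a source term that is piecewise continuous across the sides of a $90^\circ$ corner and does \emph{not} vanish at $x^*$ (provided $u(x^*)\neq 0$). This is precisely the setting of the corner singularity analysis in weighted Hölder spaces announced in the introduction, adapted (following \cite{Elschner2018}) to the right-angle, here in the genuinely non-convex exterior sector of angle $3\pi/2$ when the two rectangular profiles have a common corner but lie on opposite sides of it.

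Then I would invoke the corner-singularity lemmas (to be stated in section \ref{sec3}): a solution of the inhomogeneous Laplace equation with such a piecewise-continuous, non-vanishing source in a $90^\circ$ (or $270^\circ$) sector cannot be $C^{1,\alpha}$ up to the vertex unless the leading coefficient of the source vanishes at $x^*$; hence $u(x^*) = 0$. Having forced $u(x^*)=0$, I would return to the Helmholtz equation: $u$ is a nontrivial $\alpha$-quasiperiodic solution (nontrivial because $u^i\not\equiv 0$ and the scattered field decays) that vanishes at the corner together with a controlled expansion there; iterating the argument at the countably many distinguishing corners, combined with the expansion of analytic solutions of the Helmholtz equation and the fact that the zero set of a nontrivial solution is locally a real-analytic curve of Hausdorff dimension one, yields a contradiction with $u$ being forced to vanish on a set that is too large (or with the unique continuation/Rellich uniqueness for the forward problem). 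This pins down $\Lambda_1=\Lambda_2$. Finally, with the common profile $\Lambda$ fixed, the medium-recovery step of section \ref{medium} runs the same local analysis at a corner of $\Lambda$: if $k_{1,2}\neq k_{2,2}$, the coefficients of the leading corner singularities of $u_1$ and $u_2$ — which depend explicitly and \emph{differently} on the refractive index below — cannot both match the common outer field, again forcing $u(x^*)=0$ and then a global contradiction, so $k_{1,2}=k_{2,2}$.

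The main obstacle I anticipate is the non-convexity: the existing corner-scattering machinery is developed for convex corners, whereas two distinct rectangular profiles sharing a corner produce a reentrant $270^\circ$ sector, where the relevant homogeneous harmonic functions $r^{2/3}\cos(2\varphi/3)$ etc.\ and the weighted-space mapping properties behave differently. Making the singularity extraction rigorous in that non-convex sector — showing that the piecewise-continuous source still imposes a nonremovable singularity whose vanishing entails $u(x^*)=0$ — is the technical heart of the argument, and is exactly where the rectangular (right-angle) structure must be used decisively, since for general polygonal penetrable gratings the corresponding statement is not known.
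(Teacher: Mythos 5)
Your outline matches the paper's strategy at the start (unique continuation from the data line to get $u_1=u_2$ in $\Omega_{\Lambda_1}^+\cap\Omega_{\Lambda_2}^+$, localization at a distinguishing corner, singularity analysis of $\Delta w=f^\pm$ for the difference in weighted H\"older spaces), but the way you propose to close the argument has a genuine gap. After extracting that the leading source coefficient must vanish, you conclude only $u(x^*)=0$ and then try to reach a contradiction by collecting such zeros at ``countably many distinguishing corners'' and arguing the zero set is too large. A discrete (even countably infinite) set of zeros is not too large for a nontrivial solution of the Helmholtz equation, so this step fails. The paper instead stays at a \emph{single} corner and runs a full induction over all orders $n$: at each order it constructs special solutions of the Dirichlet and the Neumann problem (Lemma~\ref{A3}), compares the two singular expansions of the same function $w$ to kill the logarithmic terms, invokes Lemma~\ref{lem=} to identify the polynomial parts, and deduces $c_{n,a}^{\pm}=c_{n,b}^{\pm}=0$, hence $a_n^{(j)}=b_n^{(j)}=0$. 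Only after all Taylor coefficients vanish does Corollary~\ref{Cor1} give $u_1=u_2\equiv 0$ near the corner, whence $u_1\equiv 0$ globally by unique continuation, and the contradiction comes from the Rayleigh expansion of the \emph{total} field (the incident wave cannot be cancelled by an outgoing expansion with constant coefficients). Relatedly, you never say where hypothesis (ii) enters: it is exactly what turns the equality $c_{n}^{+}=c_{n}^{-}$ (obtained from the Dirichlet/Neumann comparison) into $c_{n}^{+}=c_{n}^{-}=0$, because $k_1^2-k_{1,2}^2$ and $k_{2,2}^2-k_1^2$ have opposite signs; without this the induction does not advance.

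Two further points. First, your case analysis is incomplete and partly mislocated geometrically. When a corner of $\Lambda_1$ lies \emph{strictly inside} $\Omega_{\Lambda_2}^+$, the paper does not use the difference-source argument at all: there $u_2$ is a free Helmholtz solution in a full disk and one applies the transmission-corner lemma (Lemmas~\ref{Th1}--\ref{Th2}, a ``corners always scatter'' statement) directly to the pair $(u_1,u_2)$ to force $u_1=u_2\equiv 0$. Moreover, in the case where \emph{all} corners coincide the relevant region $\Sigma^{+}\cup\Sigma^{-}\cup\Gamma_0$ has opening angle $\pi$ (a half-disk split by an internal interface), not $3\pi/2$; the reentrant $3\pi/2$ sector only occurs when a corner of one profile sits on a straight segment of the other, and the fractional exponents $r^{2/3}$ you mention do not appear because the analysis is carried out with the integer-order expansions adapted to the transmission interface. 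Second, for the refractive-index step the paper does not force $u(x^*)=0$; it shows the leading harmonic term of $f=-k_{1,2}^2u_1+k_{2,2}^2u_2$ has coefficients $-(k_{1,2}^2-k_{2,2}^2)a_m$ and $-(k_{1,2}^2-k_{2,2}^2)b_m$ with $|a_m|+|b_m|\neq 0$ (otherwise $u\equiv 0$), and the vanishing of these coefficients yields $k_{1,2}=k_{2,2}$ directly.
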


\section{Preliminary lemmas}\label{sec3}

In this section, we will present some lemmas and corollaries to prepare for the proof of Theorem \ref{Main}, which are also interesting on their own right.

We begin with some notations to be used throughout the whole paper. Let $(r,\theta)$ with $\theta\in(-\pi,\pi]$, $r\geq0$ be the polar coordinates of $x=(x_{1},x_{2})$ in $\mathbb{R}^{2}$ and define
\ben
&&\Pi_{R}^{+}:=\{(r,\pi):0\leq r\leq R\}=\{(x_{1},x_{2}):x_{2}=0,~-R\leq x_{1}\leq 0\},\\
&&\Pi_{R}:=\{(r,\pi/2):0\leq r\leq R\}=\{(x_{1},x_{2}):x_{1}=0,~0\leq x_{2}\leq R\},\\
&&\Pi_{R}^{-}:=\{(r,0):0\leq r\leq R\}=\{(x_{1},x_{2}):x_{2}=0,~0\leq x_{1}\leq R\},\\
&&\Sigma_{R}^{+}:=\{(r,\theta):0<r<R,~\pi/2<\theta<\pi\}, \\
&&\Sigma_{R}^{-}:=\{(r,\theta):0<r<R,~0<\theta<\pi/2\}.
\enn
Obviously, $\Sigma_{R}^{+}\cup\Sigma_{R}^{-}\cup\Pi_{R}^{+}\cup\Pi_{R}\cup\Pi_{R}^{-}$ is a semicircle centered at origin with radius $R$. Let $B_{R}$ denote a disk centered at origin with radius $R$ and let  $\theta_{0}\in(0,\pi)$ be a fixed angle.  Define
\ben
&&B_{R,\theta_{0}}^{+}:=\big\{(r,\theta):-\theta_{0}<\theta<\theta_{0},~0<r<R\big\}, \quad B_{R,\theta_{0}}^{-}:=B_{R}\backslash\overline{B_{R,\theta_{0}}^{+}},\\
&&\Pi_{R,\theta_{0}}:=\big\{(r,\theta_0)\cup (r, -\theta_0):~0\leq r\leq R\big\}.
\enn

\begin{lemma} \label{Th1}
Let $\kappa_{1}$ and $\kappa_{2}$ be two (complex) constants in $B_{R}$. Assume that $v_{1}$ and $v_{2}$ satisfy the Helmholtz equations
\begin{equation*}
\Delta v_{1}+\kappa_{1}v_{1}=0,\quad \Delta v_{2}+\kappa_{2}v_{2}=0, \quad\mbox{in}~B_{R},
\end{equation*}
subject to the transmission conditions
\begin{equation*}
v_{1}=v_{2}, \quad \frac{\partial v_{1}}{\partial\nu}=\frac{\partial v_{2}}{\partial\nu}, \quad \mbox{on}~\Pi_{R}^{-}\cup\Pi_{R}.
\end{equation*}
If $\kappa_{1}\neq \kappa_{2}$, then $v_{1}=v_{2}\equiv0$ in $B_{R}$.
\end{lemma}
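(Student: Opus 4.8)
The plan is to combine the interior real-analyticity of $v_1,v_2$ with the transmission conditions on the two perpendicular segments $\Pi_R^-$ (on the positive $x_1$-axis) and $\Pi_R$ (on the positive $x_2$-axis). Matching along $\Pi_R^-$ reduces each $v_j$ to its Cauchy data on the $x_1$-axis, and the two Helmholtz equations turn the normal derivatives $\partial_{x_2}^m v_j$ on $\{x_2=0\}$ into explicit differential expressions in that Cauchy data in which $\kappa_1$ and $\kappa_2$ enter differently. Imposing the matching along $\Pi_R$ at the corner to all orders then forces the common Cauchy data on $\{x_2=0\}\cap B_R$ to vanish, after which unique continuation (equivalently, the Taylor expansion in $x_2$) gives $v_1=v_2\equiv 0$.

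Concretely, I would first note that, since $\Delta v_j+\kappa_j v_j=0$ in $B_R$ with constant coefficients, each $v_j$ is real-analytic in $B_R$, hence smooth across $\Pi_R^-$ and $\Pi_R$. Set $g(x_1):=v_1(x_1,0)$ and $h(x_1):=\partial_{x_2}v_1(x_1,0)$ for $x_1\in(-R,R)$; by the transmission conditions on $\Pi_R^-$ we have $g(x_1)=v_2(x_1,0)$ and $h(x_1)=\partial_{x_2}v_2(x_1,0)$ for $x_1\in[0,R]$, and these identities extend to $(-R,R)$ by analyticity of both sides. Writing $w_m^{(j)}(x_1):=\partial_{x_2}^m v_j(x_1,0)$, differentiating the equation $\partial_{x_2}^2 v_j=-\partial_{x_1}^2 v_j-\kappa_j v_j$ in $x_2$ and restricting to $x_2=0$ gives $w_0^{(j)}=g$, $w_1^{(j)}=h$ and the recursion $w_{m+2}^{(j)}=-(w_m^{(j)})''-\kappa_j w_m^{(j)}$; moreover $v_j$ is recovered near the $x_1$-axis from $(w_m^{(j)})_m$ by Taylor expansion in $x_2$.

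Next I would study $\delta_m:=w_m^{(1)}-w_m^{(2)}$. Then $\delta_0=\delta_1\equiv 0$ and $\delta_{m+2}=-\delta_m''-\kappa_1\delta_m-(\kappa_1-\kappa_2)w_m^{(2)}$. A short induction shows that for $m\ge 1$ the function $\delta_{2m}$ is a linear combination of $g,g'',\dots,g^{(2m-2)}$ in which the coefficient of $g^{(2m-2)}$ equals $(-1)^m m\,(\kappa_1-\kappa_2)$, which is nonzero because $\kappa_1\ne\kappa_2$; similarly $\delta_{2m+1}$ is a linear combination of $h,h'',\dots,h^{(2m-2)}$ with top coefficient $(-1)^m m\,(\kappa_1-\kappa_2)$ (the $g$- and $h$-chains decouple). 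On the other hand, the transmission conditions on $\Pi_R$ say $v_1=v_2$ and $\partial_{x_1}v_1=\partial_{x_1}v_2$ on $\{x_1=0,\ 0\le x_2\le R\}$, so by analyticity $\partial_{x_2}^m(v_1-v_2)(0,0)=0$ and $\partial_{x_1}\partial_{x_2}^m(v_1-v_2)(0,0)=0$ for every $m$, i.e. $\delta_m(0)=\delta_m'(0)=0$ for all $m$. Evaluating the expression for $\delta_{2m}$ at $x_1=0$ and inducting on $m$ forces $g^{(2m-2)}(0)=0$ for all $m$, while $\delta_{2m}'(0)=0$ forces all odd-order derivatives of $g$ at $0$ to vanish; hence $g^{(n)}(0)=0$ for all $n$, and the same argument with $\delta_{2m+1}$ gives $h^{(n)}(0)=0$ for all $n$. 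By analyticity $g\equiv h\equiv 0$ on $(-R,R)$, so every $w_m^{(j)}$ vanishes identically; thus all $x_2$-derivatives of $v_j$ vanish along $\{x_2=0\}\cap B_R$, so all mixed partials of $v_j$ vanish at each point of that diameter, and real-analyticity together with connectedness of $B_R$ yields $v_1=v_2\equiv 0$.

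I expect the main obstacle to be the bookkeeping in the induction that produces the explicit nonzero leading coefficient $(-1)^m m\,(\kappa_1-\kappa_2)$ of $\delta_{2m}$ and $\delta_{2m+1}$: this is precisely the place where $\kappa_1\ne\kappa_2$ is genuinely used, and one must check that no cancellation annihilates the coefficient of the highest-order derivative. The remaining ingredients — reducing $v_j$ to its Cauchy data on one segment, transporting the corner matching to all orders along the other segment, and the final analytic continuation — are routine. It is worth noting that the argument uses only that the two segments meet at a right angle at an interior point of $B_R$, which is exactly the configuration at each corner of a rectangular grating profile $\Lambda\in\mathcal{A}$.
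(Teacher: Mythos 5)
Your proof is correct. For the record, the paper itself does not prove Lemma~\ref{Th1}: it remarks that the statement is a special case of Proposition~2.1 of the cited corner-scattering paper \cite{ElschnerJ2015} and omits the details, so your argument is a self-contained substitute rather than a variant of an in-paper proof. I checked the one place you flagged as delicate. With $w_m^{(j)}=\partial_{x_2}^m v_j(\cdot,0)$ and $\delta_m=w_m^{(1)}-w_m^{(2)}$, the recursion $\delta_{m+2}=-\delta_m''-\kappa_1\delta_m-(\kappa_1-\kappa_2)w_m^{(2)}$ is right; an easy induction gives that the coefficient of $g^{(2m)}$ in $w^{(2)}_{2m}$ is $(-1)^m$, and hence the top coefficient of $g^{(2m)}$ in $\delta_{2m+2}$ is $-\bigl[(-1)^m m(\kappa_1-\kappa_2)\bigr]-(\kappa_1-\kappa_2)(-1)^m=(-1)^{m+1}(m+1)(\kappa_1-\kappa_2)$, confirming your leading-coefficient formula with no cancellation; the $g$- and $h$-chains indeed decouple because the recursion preserves the parity of $m$. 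The remaining steps are sound: real-analyticity extends the matching of the Cauchy data from $\Pi_R^-$ to the whole diameter, the conditions on $\Pi_R$ differentiated along $x_2$ give $\delta_m(0)=\delta_m'(0)=0$ for every $m$, the triangular structure of the $\delta_{2m}$ (respectively $\delta_{2m+1}$) in terms of the even-order derivatives of $g$ (respectively $h$) then kills every derivative of $g$ and $h$ at the origin, and analytic continuation finishes. Compared with the argument in the cited reference (an induction on the homogeneous terms of the Taylor expansions of $v_1,v_2$ at the corner, which is designed to handle general opening angles and higher dimensions), your normal-derivative bookkeeping is more elementary and isolates exactly where $\kappa_1\neq\kappa_2$ enters, at the price of exploiting the right angle between $\Pi_R^-$ and $\Pi_R$ --- which is all that is needed for the rectangular gratings of this paper.
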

It should be noted that Lemma \ref{Th1} is a special case of Proposition 2.1 in \cite{ElschnerJ2015}, we omit the detailed proof in this paper. Slightly modifying Lemma \ref{Th1}, we can obtain the following result.

\begin{lemma} \label{Th2}
Suppose that $f_{1}\equiv0$ in $B_{R}\backslash\overline{\Sigma_{R}^{-}}$, $f_{1}$ is a constant different from zero in $\overline{\Sigma_{R}^{-}}$ and that $\kappa>0$ is a constant. Let $v_{1}$, $v_{2}\in H^2(B_R)$ be solutions to
\begin{align*}
\Delta v_{1}+\kappa^{2}(1+f_{1})v_{1}=0\quad \mbox{in}~B_{R}, \qquad
\Delta v_{2}+\kappa^{2}v_{2}=0\quad \mbox{in}~B_{R},
\end{align*}
subject to the transmission conditions
\begin{equation*}
v_{1}=v_{2}, \quad \frac{\partial v_{1}}{\partial\nu}=\frac{\partial v_{2}}{\partial\nu}, \quad \mbox{on}\quad \Pi_{R}^{-}\cup\Pi_{R}.
\end{equation*}
Then $v_{1}=v_{2}\equiv0$ in $B_{R}$.
\end{lemma}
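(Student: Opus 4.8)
The plan is to re-run the ``corners always scatter'' argument behind Lemma~\ref{Th1} \emph{inside the acute sector} $\Sigma_R^-$, whose opening angle is $\pi/2<\pi$, and then to transport the resulting vanishing across the re-entrant complement $B_R\setminus\overline{\Sigma_R^-}$ by ordinary unique continuation. The only genuinely new features compared with Lemma~\ref{Th1} are that the contrast $f_1$ is carried by a sector rather than by all of $B_R$, and that the leftover region is non-convex. To set up, put $w:=v_1-v_2$ on $\Sigma_R^-$. Since $f_1$ is a nonzero constant on $\overline{\Sigma_R^-}$, the field $v_1$ satisfies there the constant-coefficient equation $\Delta v_1+\kappa^2(1+f_1)v_1=0$, so that
\[
  \Delta w+\kappa^2 w=-\kappa^2 f_1\,v_1\quad\mbox{in }\Sigma_R^-,
\]
while the transmission conditions force $w=\partial_\nu w=0$ on the two right-angled sides $\Pi_R^-$ and $\Pi_R$.

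First I would prove $v_1(0)=0$ by a complex-geometric-optics (CGO) argument. Since $\Sigma_R^-=\{0<\theta<\pi/2\}$ is convex with acute opening, one can fix $\rho\in\mathbb{C}^2$ with $\rho\cdot\rho=0$ and $\mathrm{Re}(\rho\cdot x)\le-|x|$ on $\overline{\Sigma_R^-}$, for instance $\rho=(-1+i,-1-i)$, for which $\mathrm{Re}(\rho\cdot x)=-x_1-x_2$. For $\tau>0$ let $\phi_\tau$ solve $\Delta\phi_\tau+\kappa^2\phi_\tau=0$ in $B_R$ with $\phi_\tau=e^{\tau\rho\cdot x}(1+\psi_\tau)$ and $\|\psi_\tau\|_{W^{1,\infty}(B_R)}=O(\tau^{-1})$. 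Green's identity on $\Sigma_R^-$ gives
\[
  -\kappa^2 f_1\int_{\Sigma_R^-}v_1\,\phi_\tau\,dx=\int_{\partial\Sigma_R^-}\bigl(\phi_\tau\,\partial_\nu w-w\,\partial_\nu\phi_\tau\bigr)\,ds .
\]
The integrand vanishes on $\Pi_R^-\cup\Pi_R$ because $w=\partial_\nu w=0$ there, so the right-hand side reduces to the integral over the arc $\{|x|=R\}\cap\Sigma_R^-$, where $|\phi_\tau|+\tau^{-1}|\partial_\nu\phi_\tau|\le Ce^{-\tau R}$ while $w,\partial_\nu w$ are controlled (recall $v_1,v_2\in H^2(B_R)$); hence it is $O(\tau e^{-\tau R})$. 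On the left, the H\"older continuity of $v_1$ at the origin and the identity $\int_{\Sigma_R^-}e^{\tau\rho\cdot x}\,dx=\tau^{-2}\bigl(C_\Sigma+o(1)\bigr)$, with $C_\Sigma:=\int_{\Sigma_\infty}e^{\rho\cdot y}\,dy\neq0$ ($\Sigma_\infty$ the infinite quarter-plane), give $\int_{\Sigma_R^-}v_1\,\phi_\tau\,dx=\tau^{-2}\bigl(v_1(0)\,C_\Sigma+o(1)\bigr)$. Multiplying the identity by $\tau^2$ and letting $\tau\to\infty$ forces $v_1(0)=0$; by continuity $v_2(0)=v_1(0)=0$ as well.

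Next I would bootstrap this to $v_1\equiv0$ on $\Sigma_R^-$. The idea is to expand $v_1$ near the corner as $v_1=\sum_{m\ge1}P_m$ into homogeneous polynomials (with possibly logarithmic corrections), where the lowest nonvanishing part $P_N$ is harmonic by the recursion $\Delta P_{m+2}=-\kappa^2(1+f_1)P_m$, and to repeat the Green's-identity computation with the same $\phi_\tau$, now extracting the coefficient of $\tau^{-(N+2)}$. This produces $\int_{\Sigma_\infty}P_N(y)\,e^{\rho\cdot y}\,dy=0$; but a short computation — reducing to elementary trigonometric integrals over the angular interval $(0,\pi/2)$, where the \emph{right} angle enters decisively — shows $\int_{\Sigma_\infty}P\,e^{\rho\cdot y}\,dy\neq0$ for every nonzero harmonic polynomial $P$, so $P_N\equiv0$; iterating through all orders gives $v_1\equiv0$ on $\Sigma_R^-$, hence $v_1=\partial_\nu v_1=0$ on $\Pi_R^-\cup\Pi_R$ and, by the transmission conditions, $v_2=\partial_\nu v_2=0$ there too. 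I expect this step to be the main obstacle: the CGO identity at each order requires the legitimacy of the corner expansion of $v_1$ together with sharp (weighted) control of its remainder — including the logarithmic terms generated by the discontinuity of the source $\kappa^2 f_1\chi_{\Sigma_R^-}v_1$ at the re-entrant corner shared by $\Sigma_R^-$ and its complement. This is exactly where existing convex corner-scattering theory does not apply and where the weighted H\"older singularity analysis of \cite{Elschner2018}, made possible by the rectangular (right-angle) geometry, is needed.

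Finally, $v_2$ solves $\Delta v_2+\kappa^2 v_2=0$ in the connected set $B_R$ and, by the previous step, has vanishing Cauchy data on the segments $\Pi_R^-\cup\Pi_R$, so Holmgren's uniqueness theorem and analytic continuation give $v_2\equiv0$ in $B_R$. Consequently $v_1=v_2\equiv0$ in $B_R\setminus\overline{\Sigma_R^-}$, and together with $v_1\equiv0$ on $\Sigma_R^-$ this gives $v_1=v_2\equiv0$ in $B_R$, as asserted.
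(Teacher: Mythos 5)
Your overall strategy (run a CGO/moment argument in the acute sector $\Sigma_R^-$, then propagate the vanishing by Holmgren and unique continuation) is plausible in outline, but the step you yourself flag as the crux contains a false claim, and it fails precisely because the angle here is $\pi/2$. With your choice $\rho=(-1+i,-1-i)$ one has $\rho\cdot y=(-1+i)(y_1+iy_2)$, and the nonzero harmonic polynomial $P(y)=y_1-iy_2=\bar z$ satisfies
\begin{equation*}
\int_{\Sigma_\infty}\bar z\,e^{\rho\cdot y}\,dy
=\frac{1}{(1-i)^2}\cdot\frac{1}{1+i}-i\cdot\frac{1}{1-i}\cdot\frac{1}{(1+i)^2}
=\frac{1+i}{4}-\frac{1+i}{4}=0 ,
\end{equation*}
and more generally $\int_{\Sigma_\infty}\bar z^{N}e^{\mu z}\,dy=0$ for every odd $N$ and every admissible $\mu$, since the angular factor $\int_0^{\pi/2}e^{-i(2N+2)\theta}\,d\theta$ vanishes. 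So the assertion that ``$\int_{\Sigma_\infty}P\,e^{\rho\cdot y}\,dy\neq0$ for every nonzero harmonic polynomial $P$'' is wrong for the quarter-plane, and the induction $P_N\equiv0$ does not close. This is exactly the known degeneracy of the CGO method at right angles (it is the reason the opening angle $\pi/2$ is excluded in \cite{PSV17}). The gap is repairable — writing $P_N=\alpha z^N+\beta\bar z^N$ and testing against \emph{both} families $\rho\parallel(1,i)$ and $\rho\parallel(1,-i)$ yields a $2\times2$ system with determinant $1-(N+1)^{-2}\neq0$ for $N\geq1$ — but you neither noticed the degeneracy nor supplied the second family, and the accompanying order-by-order expansion of $v_1$ with weighted remainder and logarithmic control is only sketched.

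For comparison, the paper's proof sidesteps all of this. Since $v_2$ is real-analytic in $B_R$ (entire solution of a constant-coefficient Helmholtz equation), the transmission conditions make the Cauchy data of $v_1$ on $\Pi_R^-\cup\Pi_R$ analytic; by Cauchy--Kowalevski one extends $v_1$ from $\Sigma_\varepsilon^-$ to a function $w_1$ solving $\Delta w_1+\kappa_1 w_1=0$ in a full disk $B_\varepsilon$ with $\kappa_1:=\kappa^2(1+f_1)\neq\kappa^2$, and then Lemma~\ref{Th1} applied to the pair $(w_1,v_2)$ gives $w_1=v_2\equiv0$ in $B_\varepsilon$, whence $v_1\equiv v_2\equiv 0$ in $B_R$ by unique continuation. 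If you want to pursue your route, you must either import the two-family CGO argument above together with the full corner-expansion machinery, or reduce to Lemma~\ref{Th1} as the paper does.
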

\begin{proof}
Set $\kappa_{1}:=\kappa^{2}(1+f_{1})$ in $\overline{\Sigma_{R}^{-}}$. Then $\kappa_{1}$ is a constant different from $\kappa^{2}$ and $\Delta v_1+\kappa_1^2v_1=0$ in $\Sigma_R^-$. Since $v_2$ is analytic in $B_R$,  the Cauchy data of $v_{1}$ on $\Pi_R^-$ and $\Pi_R$ are analytic by the transmission boundary conditions. By the Cauchy-Kowalewski theorem and Holmgren's theorem, we can find a solution $\widetilde{v}_{1}$ to the following Cauchy problem in a piecewise analytic domain  (see e.g., \cite[Theorem 2.1]{LHY})
\begin{equation*}
\left\{\begin{array}{lll}
 \Delta\widetilde{v}_{1}+\kappa_{1}\widetilde{v}_{1}=0,& \quad \mbox{in} \quad B_{\varepsilon}\backslash\overline{\Sigma_{\varepsilon}^{-}}, \vspace{0.2cm} \\
 \widetilde{v}_{1}=v_{1},~\frac{\partial \widetilde{v}_{1}}{\partial \nu}=\frac{\partial v_{1}}{\partial \nu},  & \quad \mbox{on}\quad\Pi_{\varepsilon}^{-}\cup\Pi_{\varepsilon},
 \end{array}\right.
\end{equation*}
for some $0<\varepsilon<R$. Set $w_{1}:=\widetilde{v}_{1}$ in $B_{\varepsilon}\backslash\overline{\Sigma_{\varepsilon}^{-}}$,  $w_{1}:=v_{1}$ in $\Sigma_{\varepsilon}^{-}$
and $\kappa_{2}:=\kappa^{2}$.
It then follows that
\begin{equation*}
\left\{\begin{array}{lll}
 \Delta w_{1}+\kappa_{1}w_{1}=0,& \quad \mbox{in} \quad B_{\varepsilon},\vspace{0.2cm}\\
 \Delta v_{2}+\kappa_{2}v_{2}=0,& \quad  \mbox{in} \quad B_{\varepsilon}, \vspace{0.2cm}\\
 w_{1}=v_{2},\quad \frac{\partial w_{1}}{\partial \nu}=\frac{\partial v_{2}}{\partial \nu},  & \quad \mbox{on} \quad \Pi_{\varepsilon}^{-}\cup\Pi_{\varepsilon}.
 \end{array}\right.
\end{equation*}
Applying Lemma \ref{Th1}, we obtain $w_{1}=v_{2}\equiv0$ in $B_{\varepsilon}$. This together with the unique continuation leads to $v_{1}\equiv0$ in $B_{R}$. The proof is complete.
\end{proof}

Next, we investigate the asymptotic behavior of solutions to an inhomogeneous Laplacian equation in the disk $B_{R}$.
\begin{lemma} \label{A1}
Consider the inhomogeneous Laplace equation
\begin{equation*}
\left\{\begin{array}{lll}
  \Delta u=f,& \quad \mbox{in}\quad B_{R,\theta_{0}}^{\pm}, \\
  \big[u\big]=\big[\frac{\partial u}{\partial\nu}\big]=0, & \quad \mbox{on}\quad \Pi_{R,\theta_{0}},
\end{array}\right.
\end{equation*}
where $f\in C^{0,\delta}(B_{R,\theta_{0}}^{\pm})$ ($0<\delta<1$) and $f(r,\theta)\sim C^{\pm}r^{m}$ in $B_{R,\theta_{0}}^{\pm}$ as $r\rightarrow0^{+}$, with $m\geq0$ and $C^{\pm}\in \mathbb{C}$. Then
\begin{equation}\label{series}
u(r,\theta)=\sum_{n\geq0}r^{n}\big[a_{n}\sin(n\theta)+b_{n}\cos(n\theta)\big]+\mathcal{O}(r^{m+2}),\quad r\rightarrow0^{+},
\end{equation}
where $a_{n},b_{n}\in\mathbb{C}$ are such that the series in \eqref{series} is uniform convergent near the origin.
\end{lemma}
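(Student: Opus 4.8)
The plan is to treat the transmission conditions on $\Pi_{R,\theta_0}$ as a way of \emph{gluing} the two pieces $B_{R,\theta_0}^{+}$ and $B_{R,\theta_0}^{-}$ into a single solution of an inhomogeneous Laplace equation on the whole disk $B_R$, and then to apply the classical interior regularity/expansion theory for the Laplacian. More precisely, first I would observe that the jump conditions $[u]=[\partial_\nu u]=0$ across the rays $\theta=\pm\theta_0$ mean that $u$, defined piecewise, is actually in $H^1(B_R)$ and is a (weak, hence distributional) solution of $\Delta u = f$ on all of $B_R$, where $f$ is the function equal to the given piecewise-defined source (no singular measure appears on the rays precisely because both the trace and the normal-derivative trace match). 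Since $f\in C^{0,\delta}$ on each closed sector and the sectors meet along the rays, $f$ is in $L^\infty(B_R)$, so elliptic regularity gives $u\in C^{1,\delta}_{loc}$; in particular $u$ is bounded near the origin and admits a well-defined value there.

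The core step is then the separation-of-variables expansion. I would split $u = u_h + u_p$ near the origin, where $u_p$ is a particular solution with $\Delta u_p = f$ and $u_h$ is harmonic in $B_\rho$ for some small $\rho$. For the particular part: since $f(r,\theta)\sim C^{\pm}r^m$ as $r\to 0^+$, I would construct $u_p$ by solving $\Delta u_p = f$ with, say, the Newtonian-potential representation $u_p(x)=\frac{1}{2\pi}\int_{B_\rho}\log|x-y|\,f(y)\,dy$; the homogeneity $f=O(r^m)$ together with the Hölder continuity yields the bound $u_p(r,\theta)=\mathcal{O}(r^{m+2})$ as $r\to 0^+$ (this is the standard gain of two powers for the Newtonian potential of a source vanishing to order $m$, with the Hölder regularity ensuring the error term is genuinely $O(r^{m+2})$ and not merely $O(r^{m+2-\epsilon})$). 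For the harmonic part: $u_h\in H^1(B_\rho)$ harmonic in the \emph{whole} disk admits the classical Fourier expansion $u_h(r,\theta)=\sum_{n\ge 0} r^n\big[a_n\sin(n\theta)+b_n\cos(n\theta)\big]$, uniformly convergent on compact subsets of $B_\rho$ — here it is essential that $u_h$ is harmonic across the gluing rays, which is exactly what the transmission conditions buy us, so no negative powers $r^{-n}$ or fractional powers $r^{n\pi/\theta_0}$ enter. Combining the two pieces gives \eqref{series}.

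The main obstacle — and the point that requires care rather than being routine — is justifying that no singular terms are produced at the corner (origin) despite the source $f$ being only piecewise Hölder and discontinuous across the rays $\theta=\pm\theta_0$. One must verify rigorously that the distributional Laplacian of the glued function $u$ picks up \emph{no} single- or double-layer contribution supported on $\Pi_{R,\theta_0}$; this is precisely where $[u]=[\partial_\nu u]=0$ is used, and it is the step that distinguishes this lemma from a generic corner-singularity statement (where one would expect $r^{n\pi/\theta_0}$-type terms). A secondary technical point is controlling the interaction between the two sectors in the Newtonian-potential estimate: since $C^+\neq C^-$ in general, $f$ is not itself $O(r^m)$ radially in a smooth sense, but only sector-by-sector; nevertheless $|f|\le C r^m$ in $B_\rho$ with a single constant, which is all the potential-theoretic estimate needs, so the bound $u_p=\mathcal{O}(r^{m+2})$ survives. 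Once these two points are in hand, the uniform convergence of the series follows from the standard Cauchy estimates for the harmonic function $u_h$, and the proof is complete.
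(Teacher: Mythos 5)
Your reduction to a single inhomogeneous Laplace equation on the whole disk is fine and is implicitly what the paper uses as well (the transmission conditions give $u\in H^{2}(B_R)$, so no layer terms appear on the rays), and the Fourier expansion of the harmonic remainder is standard. The gap is in the central estimate: the Newtonian potential $u_p(x)=\frac{1}{2\pi}\int_{B_\rho}\log|x-y|\,f(y)\,\mathrm{d}y$ of a source satisfying $|f|\le C r^{m}$ is \emph{not} $\mathcal{O}(r^{m+2})$. Already for $f\equiv 1$ (so $m=0$) one has $u_p(0)=\int_0^\rho r\log r\,\mathrm{d}r\neq 0$, so $u_p$ does not even vanish at the origin. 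The ``gain of two powers'' you invoke is a regularity statement ($f\in C^{0,\delta}\Rightarrow u_p\in C^{2,\delta}$), not a statement about the vanishing order at a point. To repair the argument one would have to subtract from the kernel its Taylor polynomial in $x$ of degree $m+1$ at $x=0$ (whose contribution is harmonic in $x$ and can be absorbed into $u_h$) and estimate the remainder; that is precisely the nontrivial content of the lemma and cannot be dispensed with by citing Schauder theory. Moreover such an estimate generically yields only $\mathcal{O}(r^{m+2}\log(1/r))$ because of the resonant angular mode $n=m+2$ --- the same logarithm that appears explicitly in the special solutions of Lemma \ref{A3}.

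The paper avoids the potential-theoretic route entirely: it writes $u=u_0+\sum_{n}f_n(r)e^{in\theta}$ with $u_0$ a general harmonic function, derives the radial ODE $\frac{1}{r}(rf_n')'-\frac{n^{2}}{r^{2}}f_n=\widetilde f_n$ with $\widetilde f_n\sim Cr^{m}$, and reads off $f_n\sim Cr^{m+2}$ mode by mode. This exploits the \emph{radial} vanishing hypothesis $f\sim C^{\pm}r^{m}$ directly, which is the essential input: $f$ is only H\"older continuous, so no Taylor expansion of $f$ to order $m$ is available, and an argument that tacitly relies on one will fail. As written, your proposal does not use this radial structure at the one step where it matters, so the key bound $u_p=\mathcal{O}(r^{m+2})$ is unjustified and, for the unmodified Newtonian potential, false.
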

\begin{proof}
Write $u_{0}(r,\theta)=\sum\limits_{n\geq0}r^{n}\big[a_{n}\sin(n\theta)+b_{n}\cos(n\theta)\big]$. Then $u_0$ is a general solution to the homogeneous equation $\Delta u_{0}=0$ in $B_{R}$. Since $u\in H^2(B_R)$, we make the ansatz that
\begin{equation} \label{equ1}
u(r,\theta)-u_{0}(r,\theta)=\sum_{n\geq0}f_{n}(r)e^{in\theta},\qquad
f_n(r):=\frac{1}{2\pi}\int_{0}^{2\pi} (u-u_0)e^{-in\theta}\,d\theta.
\end{equation}
Inserting (\ref{equ1}) into the equation $\Delta u=f$, we find that
\begin{equation*}
f(r,\theta)=\Delta u_{0}(r,\theta)+\Delta\Big(\sum_{n\geq0}f_{n}(r)e^{in\theta}\Big) 
=\sum_{n\geq0}\Big[\frac{1}{r}(rf_{n}^{'})^{'}-\frac{n^{2}}{r^{2}}f_{n}\Big]e^{in\theta}.
\end{equation*}
Multiplying a term $e^{-in\theta}$ and integrating with respect to $\theta$ on both sides yield
\begin{equation*}
\frac{1}{r}(rf_{n}^{'})^{'}-\frac{n^{2}}{r^{2}}f_{n}=\widetilde{f}_{n} :=\frac{1}{2\pi}\int_{-\pi}^{\pi}f(r,\theta)e^{-in\theta}{\rm d}\theta.
\end{equation*}
Since
\begin{equation*}
2\pi\widetilde{f}_{n}=\int_{-\theta_{0}}^{\theta_{0}}f(r,\theta)e^{-in\theta}{\rm d}\theta +\Big(\int_{-\pi}^{-\theta_{0}}+\int_{\theta_{0}}^{\pi}\Big) f(r,\theta)e^{-in\theta}{\rm d}\theta,
\end{equation*}
we conclude from our assumption on $f$ that $\widetilde{f}_{n}(r,\theta)\sim Cr^{m}$ as $r\rightarrow0^{+}$. Hence, ${f}_{n}(r)\sim Cr^{m+2}$ as $r\rightarrow0^{+}$ for all $n\geq0$, which completes the proof.
\end{proof}
\vspace{0.2cm}

Based on the above Lemma \ref{A1}, we obtain the following corollary.
\begin{corollary} \label{Cor1}
Consider the transmission problem:
\begin{equation*}
\left\{\begin{array}{lll}
  \Delta u^{\pm}+k^{2}_{\pm}u^{\pm}=0,& \quad \mbox{in}\quad B_{R,\theta_{0}}^{\pm}, \\
  u^{+}=u^{-},\quad \frac{\partial u^{+}}{\partial\nu}=\frac{\partial u^{-}}{\partial\nu}, & \quad \mbox{on}\quad \Pi_{R,\theta_{0}},
\end{array}\right.
\end{equation*}
and define $u:=u^{+}$ in $B_{R,\theta_{0}}^{+}$, $u:=u^{-}$ in $B_{R,\theta_{0}}^{-}$. Then the function $u\in H^2(B_R)$ takes the asymptotic form
\begin{equation}\label{asy0}
u=\sum_{n\geq0}r^{n}\big[a_{n}\sin(n\theta)+b_{n}\cos(n\theta)\big]+\mathcal{O}(r^{2})\quad \mbox{as }r\rightarrow0^{+}, ~a_{n},b_{n}\in\mathbb{C}.
\end{equation}
Furthermore, if $u\not\equiv0$ in $B_{R}$,  we can write \eqref{asy0} as
\begin{equation}\label{asy}
u=\sum_{n\geq m}r^{n}\big[a_{n}\sin(n\theta)+b_{n}\cos(n\theta)\big]+\mathcal{O}(r^{m+2})\quad \mbox{as }r\rightarrow0^{+}, ~a_{n},b_{n}\in\mathbb{C},
\end{equation}
for some $m\geq0$ such that $|a_m|+|b_m|\neq 0$.
\end{corollary}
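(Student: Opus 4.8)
The strategy is to recast the transmission problem as a single inhomogeneous Laplace equation on the disk and to apply Lemma \ref{A1}, iterating it to extract the leading term. First I would set $f:=-k_\pm^2\,u^\pm$ in $B_{R,\theta_0}^\pm$. The transmission conditions $u^+=u^-$ and $\partial_\nu u^+=\partial_\nu u^-$ on $\Pi_{R,\theta_0}$ are precisely the compatibility relations ruling out a singular layer there, so $\Delta u=f$ holds in the distributional sense on the whole of $B_R$; since $f\in L^\infty(B_R)$, interior elliptic regularity yields $u\in H^2_{loc}(B_R)$, and the two-dimensional embedding $H^2\hookrightarrow C^{0,\delta}$ then gives $u\in C^{0,\delta}$ locally, hence $f\in C^{0,\delta}(\overline{B_{R',\theta_0}^\pm})$ for every $\delta\in(0,1)$ after a harmless shrinking of $R$ to $R'$. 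Because $u$ is continuous at the origin, $f(r,\theta)\to-k_\pm^2\,u(0)$ as $r\to0^+$ in $B_{R,\theta_0}^\pm$, i.e.\ $f\sim C^\pm r^0$ with $C^\pm=-k_\pm^2\,u(0)$, so Lemma \ref{A1} with $m=0$ gives exactly \eqref{asy0}, with $u_0:=\sum_{n\ge0}r^n[a_n\sin(n\theta)+b_n\cos(n\theta)]$ uniformly convergent near the origin.

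For the refined statement, assume $u\not\equiv0$ in $B_R$. By the first step $u$ solves $\Delta u+q\,u=0$ on the connected disk $B_R$, where the potential $q$, equal to $k_+^2$ on $B_{R,\theta_0}^+$ and to $k_-^2$ on $B_{R,\theta_0}^-$, is bounded; strong unique continuation for elliptic equations with bounded potentials then forbids infinite-order vanishing at an interior point, so $u$ vanishes at the origin to some finite order $m\ge0$. I would then bootstrap Lemma \ref{A1}: once $u=u_0+\mathcal O(r^\ell)$ is known, the remainder it produces equals $\sum_n f_n(r)e^{in\theta}$ with $f_n\sim c_n r^\ell$ (in the notation of the proof of Lemma \ref{A1}), hence has a ``clean'' leading profile $r^\ell g(\theta)+o(r^\ell)$; consequently $u$ itself satisfies $u=r^{\mu}h(\theta)+o(r^{\mu})$ with $\mu=\min(m,\ell)$, so $f=-k_\pm^2\,u=r^{\mu}g^\pm(\theta)+o(r^{\mu})$, where $g^\pm$ is piecewise continuous with a jump across $\Pi_{R,\theta_0}$ coming only from the factor $k_\pm^2$, and all $\theta$-Fourier coefficients of $f$ behave like $C r^{\mu}$ as $r\to0^+$; re-running the computation in the proof of Lemma \ref{A1} then improves the remainder to $\mathcal O(r^{\mu+2})$. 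Since $\mu=\min(m,\ell)$, the remainder exponents $\ell=2,4,6,\dots$ strictly increase until $\ell>m$, after which one further step produces $\ell=m+2$, i.e.\ $u=u_0+\mathcal O(r^{m+2})$; as $u_0$ is harmonic its lowest nonzero term has the form $a_m\sin(m\theta)+b_m\cos(m\theta)$ and coincides with the order-$m$ term of $u$, which is nonzero by the choice of $m$. Dropping the vanishing terms of order $<m$ from $u_0$ then yields \eqref{asy}.

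I expect the bootstrap to be the delicate point: one has to check that at each stage the inhomogeneity $f=-k_\pm^2\,u$ retains a clean leading term $r^{\mu}g^\pm(\theta)$ with $g^\pm$ piecewise smooth --- so that Lemma \ref{A1}, in the slightly more general form its proof actually provides (allowing a $\theta$-dependent, piecewise continuous leading coefficient), is applicable --- and that the harmonic parts produced by the successive applications are mutually consistent. The unique-continuation step is classical but relies essentially on the $H^2$-regularity and the no-jump transmission conditions established in the first step.
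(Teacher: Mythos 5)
Your proof is correct and takes essentially the same route as the paper's: rewrite the transmission problem as $\Delta u=f$ in $B_R$ with $f:=-k_\pm^2u^\pm$, use $H^2$-regularity and the H\"older embedding to apply Lemma~\ref{A1}, and then bootstrap to isolate the lowest-order harmonic term. The paper's own proof is considerably terser and leaves implicit both the strong-unique-continuation step that guarantees a finite vanishing order $m$ and the fact that in the bootstrap Lemma~\ref{A1} must be invoked with a $\theta$-dependent (piecewise continuous) leading coefficient rather than the constants $C^\pm$ of its statement; your proposal identifies and fills both of these points correctly.
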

\begin{remark}
The relation \eqref{asy}  means that the lowest order expansion of $u$ is harmonic.
\end{remark}
\begin{proof}
We rewrite the equation for $u$ as $\Delta u=f$ in $B_{R}$, where $f:=-k^{2}_{+}u^{+}$ in $B_{R,\theta_{0}}^{+}$ and $f:=-k^{2}_{-}u^{-}$ in $B_{R,\theta_{0}}^{-}$. Since $f\in L^{2}(B_{R})$, we have $u\in H^{2}(B_{R})$, which is compactly imbedded into both $C^{0,\delta}(B_{R,\theta_{0}}^{+})$  and $C^{0,\delta}(B_{R,\theta_{0}}^{-})$ for some $0<\delta<1$. Applying Lemma \ref{A1}, we get the relation \eqref{asy0}. This also proves \eqref{asy} for $m=0$. If $u\sim C^\pm r^m$ as $r\rightarrow0$ in $B_{R, \theta_0}^\pm$ for some $m\geq 1$ and $C^\pm\in\mathbb{C}$, then  $f\sim -k^2_\pm C^\pm r^m$ near the origin and applying Lemma \ref{A1} again yields \eqref{asy}.
\end{proof}

To carry out the proof of Theorem \ref{Main}, we need to analyze the singularity of the inhomogeneous Laplacian equation in the semicircle $B_R\cap\{x_2>0\}$ with a piecewise continuous right term defined on $\Sigma_R^\pm$ and with the Dirichlet or Neumann boundary condition on $\Pi_R^\pm$. For this purpose, we construct a special solution to the Dirichlet problem (\ref{equ2}) or the Neumann problem (\ref{equ4}) when the right hand side is given by a homogeneous polynomial. Here and below, the notation $q_{k}$ denotes a homogeneous polynomial of order $k\geq0$ and the generic constants are denoted by $c$ or $c^{\pm}$ which may vary from line to line.  The proof of the following result is motivated by \cite[Lemma 3.6, Chapter 2.3.4]{NP}.
\begin{lemma} \label{A3}
Consider the Dirichlet problem:
\begin{equation} \label{equ2}
\left\{\begin{array}{lll}
  \Delta u=c^{\pm}q_{k},& \quad \mbox{in} \quad \Sigma_{R}^{\pm}, \\ \big[u\big]=\big[\frac{\partial u}{\partial\nu}\big]=0, & \quad \mbox{on} \quad \Pi_{R},\\
   u=0,& \quad \mbox{on} \quad \Pi_{R}^{+}\cup\Pi_{R}^{-},
\end{array}\right.
\end{equation}
and the Neumann problem:
\begin{equation} \label{equ4}
\left\{\begin{array}{lll}
  \Delta u=c^{\pm}q_{k},& \quad \mbox{in} \quad \Sigma_{R}^{\pm}, \\ \big[u\big]=\big[\frac{\partial u}{\partial\nu}\big]=0, & \quad \mbox{on} \quad \Pi_{R},\vspace{0.05cm}\\
   \frac{\partial u}{\partial\nu}=0,& \quad \mbox{on} \quad \Pi_{R}^{+}\cup\Pi_{R}^{-}.
\end{array}\right.
\end{equation}
There exist a special solution to (\ref{equ2}) of the form
\begin{equation} \label{equ3}
u(r,\theta)=q_{k+2}^{\pm}(r,\theta)+C_{k, D}\; r^{k+2}\big\{\ln r\sin[(k+2)\theta] +\theta\cos[(k+2)\theta]\big\} \quad \mbox{in}~\Sigma_{R}^{\pm}
\end{equation}
for some $C_{k, D}\in \mathbb{C}$. In the Neumann case, a special solution to (\ref{equ4}) takes the form
\begin{equation}
u(r,\theta)=q_{k+2}^{\pm}(r,\theta)+C_{k, N}\;r^{k+2}\big\{\ln r\cos[(k+2)\theta]-\theta\sin[(k+2)\theta]\big\}  \quad \mbox{in }\Sigma_{R}^{\pm}
\end{equation}
for some $C_{k, N}\in \mathbb{C}$. Moreover, we have $C_{k, D}=C_{k, N}=0$ if $c^+=c^-=0$, and $q_{k+2}^\pm$ solve the same Dirichlet or Neumann problem in $\Sigma_R^\pm$.
\end{lemma}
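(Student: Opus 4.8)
The plan is to separate variables in polar coordinates, reduce (\ref{equ2}) and (\ref{equ4}) to a one‑dimensional Sturm--Liouville problem on the arc $\theta\in(0,\pi)$ with a ``vacuous'' transmission point at $\theta=\pi/2$ — the conditions $[u]=[\partial_\nu u]=0$ on $\Pi_R$ merely impose $C^1$‑matching there, which is no restriction on the transverse operator — and to read off the logarithm from the Fredholm alternative. I treat the Dirichlet problem in detail; the Neumann case is identical after the obvious changes. Expanding the homogeneous polynomial in polar form, $q_k(x)=r^kP_k(\theta)$ with $P_k$ a trigonometric polynomial of degree $\le k$ whose frequencies have the parity of $k$, I look for a solution of the special shape
\[
u(r,\theta)=r^{k+2}\psi(\theta)+r^{k+2}(\ln r)\,\chi(\theta),
\]
allowing $\psi$ to carry the non‑periodic term $\theta\cos[(k+2)\theta]$. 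Computing the Laplacian in polar coordinates, $\Delta u=c^\pm r^kP_k$ in $\Sigma_R^\pm$ is equivalent to
\[
\chi''+(k+2)^2\chi=0,\qquad \psi''+(k+2)^2\psi=c^\pm P_k-2(k+2)\chi \quad\mbox{on }(0,\pi/2)\cup(\pi/2,\pi),
\]
together with $C^1$‑matching of $\psi,\chi$ at $\theta=\pi/2$ (the transmission conditions on $\Pi_R$) and $\psi(0)=\psi(\pi)=\chi(0)=\chi(\pi)=0$ (the Dirichlet conditions on $\Pi_R^+\cup\Pi_R^-$).

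Next I solve for $\chi$ and extract the logarithmic coefficient. Since $k+2\in\mathbb{Z}$, one has $\sin[(k+2)\pi]=0$, so the homogeneous problem for $\chi$ has the one‑dimensional solution space $\mathbb{C}\sin[(k+2)\theta]$, and I set $\chi=C_{k,D}\sin[(k+2)\theta]$. The operator $\partial_\theta^2+(k+2)^2$ on $(0,\pi)$ with these boundary/transmission conditions is self‑adjoint with kernel exactly $\mathbb{C}\sin[(k+2)\theta]$, so the $\psi$‑equation is solvable if and only if its right‑hand side is $L^2(0,\pi)$‑orthogonal to $\sin[(k+2)\theta]$. Because $c^\pm$ is only piecewise constant this orthogonality fails in general and instead pins down
\[
C_{k,D}=\frac{1}{\pi(k+2)}\Big(c^-\!\!\int_0^{\pi/2}+\,c^+\!\!\int_{\pi/2}^{\pi}\Big)P_k(\theta)\sin[(k+2)\theta]\,d\theta,
\]
which in particular vanishes when $c^+=c^-=0$. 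With this value of $C_{k,D}$, fix any solution $\psi$ of the $\psi$‑problem.

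Finally I extract the polynomial part. Using $[\theta\cos((k+2)\theta)]''+(k+2)^2[\theta\cos((k+2)\theta)]=-2(k+2)\sin[(k+2)\theta]$, the function $\widetilde\psi^\pm:=\psi-C_{k,D}\theta\cos[(k+2)\theta]$ solves $\widetilde\psi^\pm{}''+(k+2)^2\widetilde\psi^\pm=c^\pm P_k$ on $\Sigma_R^\pm$. Since every frequency occurring in $P_k$ is $j\le k<k+2$ while the homogeneous solutions oscillate at frequency $k+2$, the equation is non‑resonant and $\widetilde\psi^\pm$ is necessarily a trigonometric polynomial with frequencies in $\{\,j\le k:\ j\equiv k\ (\mathrm{mod}\ 2)\,\}\cup\{k+2\}$, hence the angular part of a homogeneous polynomial $q_{k+2}^\pm$ of degree $k+2$ satisfying $\Delta q_{k+2}^\pm=c^\pm q_k$ and matching $C^1$‑data with one another on $\Pi_R$. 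Reassembling, and noting that $r^{k+2}\{\ln r\,\sin[(k+2)\theta]+\theta\cos[(k+2)\theta]\}=\operatorname{Im}(z^{k+2}\log z)$ is harmonic and real‑analytic on the half‑disk $\{\,0<\theta<\pi\,\}$ (so it automatically respects the transmission conditions and adds nothing to $\Delta u$), one obtains precisely the form (\ref{equ3}); if $c^+=c^-=0$ then $C_{k,D}=0$ and $\psi\in\mathbb{C}\sin[(k+2)\theta]$, so $q_{k+2}^\pm$ is a multiple of $\operatorname{Im}z^{k+2}$ and solves the homogeneous Dirichlet problem. For the Neumann problem (\ref{equ4}) one uses $\psi'(0)=\psi'(\pi)=\chi'(0)=\chi'(\pi)=0$, the kernel $\mathbb{C}\cos[(k+2)\theta]$, the identity $[\theta\sin((k+2)\theta)]''+(k+2)^2[\theta\sin((k+2)\theta)]=2(k+2)\cos[(k+2)\theta]$, and recognizes the log term as $\operatorname{Re}(z^{k+2}\log z)$.

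The main obstacle is conceptual rather than computational: one must see that $k+2$ lies in the spectrum of the operator pencil of this transmission problem — so a logarithmic term is generically unavoidable — and that the solvability defect producing it is created entirely by the jump $c^+\neq c^-$ in the coefficient, not by the angular content of $q_k$, which sits at strictly lower frequency; the remaining work is then to recognize $\operatorname{Im}(z^{k+2}\log z)$ (respectively $\operatorname{Re}(z^{k+2}\log z)$) as the harmonic function whose boundary traces exactly absorb that defect while leaving both the Laplacian and the transmission conditions untouched.
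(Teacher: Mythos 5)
Your proof is correct, and the core reduction is the same as the paper's: both separate variables, write $q_k=r^kp_k(\theta)$, and reduce the PDE to the angular ODE $[\partial_\theta^2+(k+2)^2]f=c^\pm p_k$ on $(0,\pi/2)\cup(\pi/2,\pi)$ with $C^1$-matching at $\pi/2$, exploiting that $\operatorname{Im}(z^{k+2}\log z)$ (resp.\ $\operatorname{Re}(z^{k+2}\log z)$) is harmonic. Where you genuinely diverge is in how the logarithmic coefficient is pinned down. The paper writes the general solution of the angular ODE explicitly as $a^\pm\cos[(k+2)\theta]+b^\pm\sin[(k+2)\theta]+h_k^\pm(\theta)$ with a variation-of-parameters particular solution $h_k^\pm$, then solves the resulting linear system for $a^\pm,b^\pm$ and chooses $C_{k,D}$ to make the (otherwise overdetermined) boundary condition at $\theta=\pi$ consistent; the vanishing of $C_{k,D}$ when $c^\pm=0$ is read off from that system. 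You instead invoke the Fredholm alternative for the self-adjoint operator $\partial_\theta^2+(k+2)^2$ on $(0,\pi)$ (correctly noting that the transmission point at $\pi/2$ is vacuous for the 1D problem), which yields the closed-form expression $C_{k,D}=\frac{1}{\pi(k+2)}\bigl(c^-\int_0^{\pi/2}+c^+\int_{\pi/2}^{\pi}\bigr)P_k\sin[(k+2)\theta]\,d\theta$, together with the sharper structural observation — absent from the paper — that the orthogonality defect is produced entirely by the jump $c^+\neq c^-$ (since $P_k$, having frequencies of the parity of $k$ and degree at most $k$, is orthogonal to $\sin[(k+2)\theta]$ over the full interval). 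Your parity argument for why $r^{k+2}\widetilde\psi^\pm$ is a genuine homogeneous polynomial of degree $k+2$ is also correct and fills in a point the paper leaves implicit. The two routes buy different things: the paper's explicit linear system is entirely elementary and self-contained, while your spectral formulation is cleaner, gives $C_{k,D}$ in closed form, and explains conceptually why the logarithm must appear (the exponent $k+2$ sits in the spectrum of the operator pencil).
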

\begin{proof} We only consider the Dirichlet boundary value problem. The Neumann case can be treated analogously. Write $c=C_{k, D}$, $q_{k}(r,\theta)=r^{k}p_{k}(\theta)$ and $q_{k+2}^{\pm}(r,\theta) =r^{k+2}f_{k}^{\pm}(\theta)$. To make $u(r,\theta)$ of the form (\ref{equ3}) a solution to (\ref{equ2}), we only need to require
\begin{equation} \label{equ}
\left\{\begin{array}{lll}
  [\partial_{\theta}^{2}+(k+2)^{2}]f_{k}^{\pm}(\theta)=c^{\pm}p_{k}(\theta), \quad \mbox{in}\quad\Sigma_{R}^{\pm},\vspace{0.1cm} \\
  f_{k}^{+}(\frac{\pi}{2})=f_{k}^{-}(\frac{\pi}{2}), \quad \partial_{\theta}f_{k}^{+}(\frac{\pi}{2}) =\partial_{\theta}f_{k}^{-}(\frac{\pi}{2}),\vspace{0.1cm} \\
  f_{k}^{-}(0)=0, \quad f_{k}^{+}(\pi)=(-1)^{k+1}c\pi,
\end{array}\right.
\end{equation}
because $r^{k+2}\big\{\ln r\sin[(k+2)\theta] +\theta\cos[(k+2)\theta]\big\}$ is a harmonic function for any $r>0$. The general solution $f_{k}^{\pm}(\theta)$ to the above differential equation can be written as
\begin{equation*}
f_{k}^{\pm}(\theta)=a^{\pm}\cos[(k+2)\theta]+b^{\pm}\sin[(k+2)\theta]+h_{k}^{\pm}(\theta),
\end{equation*}
where $h_{k}^{\pm}(\theta)$ are special solutions to
\begin{equation*}
(h_{k}^{\pm}(\theta))^{\prime\prime}+(k+2)^{2}h_{k}^{\pm}(\theta) =c^{\pm}p_{k}(\theta),\quad \theta\in(0,\pi/2)\cup(\pi/2,\pi).
\end{equation*}
Through simple calculations, we may suppose that
\begin{equation*}
h_{k}^{\pm}(\theta)=\frac{c^{\pm}}{k+2}\int_{0}^{\theta}\sin[(k+2)(\theta-\tau)] p_{k}(\tau){\rm d}\tau,\quad \theta\in(0,\pi/2)\cup(\pi/2,\pi).
\end{equation*}
To determine the coefficients $a^{\pm}$ and $b^{\pm}$, we use the transmission and the boundary conditions in (\ref{equ}) to get
\be   \label{p1}
&&a^{+}\cos\frac{(k+2)\pi}{2}-a^{-}\cos\frac{(k+2)\pi}{2}+b^{+}\sin\frac{(k+2)\pi}{2}
-b^{-}\sin\frac{(k+2)\pi}{2}=p_{1},
\\ \label{p2}
&&-a^{+}\sin\frac{(k+2)\pi}{2}+a^{-}\sin\frac{(k+2)\pi}{2}+b^{+}\cos\frac{(k+2)\pi}{2}
-b^{-}\cos\frac{(k+2)\pi}{2}=p_{2},
\\  \nonumber
&&a^{-}=0 \quad \mbox{and} \quad (-1)^{k}a^{+}=(-1)^{k+1}c\pi-h_{k}^{+}(\pi),
\en
where
\begin{equation*}
p_{1}:=(h_{k}^{-}-h_{k}^{+})\big|_{\frac{\pi}{2}},\quad p_{2}:=\frac{(h_{k}^{-}-h_{k}^{+})^{'}\big|_{\frac{\pi}{2}}}{k+2}.
\end{equation*}
Since $a^{-}=0$, by equations (\ref{p1}) and (\ref{p2}) we obtain that
\begin{equation*}
a^{+}=p_{1}\cos\frac{(k+2)\pi}{2}-p_{2}\sin\frac{(k+2)\pi}{2},\quad\quad
b^{+}-b^{-}=p_{1}\sin\frac{(k+2)\pi}{2}+p_{2}\cos\frac{(k+2)\pi}{2}.
\end{equation*}
Then we can choose a proper constant $c$ such that $-c\pi-h_{k}^{+}(\pi)=p_{1}\cos\frac{(k+2)\pi}{2}-p_{2}\sin\frac{(k+2)\pi}{2}$. Hence, the coefficients $a^{\pm}$ are uniquely determined and there exist infinite solutions $(b^{+},b^{-})$ satisfying the system (\ref{p1})--(\ref{p2}). On the other hand, it is obvious that $c=0$ if $c^\pm=0$. The proof is complete.
\end{proof}

\begin{lemma} \label{lem=}
Let $H_{n}(r, \theta)$ be a  harmonic polynomial of order $n$ in two dimensions. If the homogeneous polynomials $q^{\pm}_{n+2}$ ($n\geq0$) satisfy
\begin{equation}\label{3.7}
\left\{\begin{array}{lll}
  \Delta q^{\pm}_{n+2}=H_{n},&\quad\mbox{in}\quad \Sigma_{R}^{\pm},\\
  q^{+}_{n+2}=q^{-}_{n+2},\quad \frac{\partial q^{+}_{n+2}}{\partial \nu}=\frac{\partial q^{-}_{n+2}}{\partial \nu}, & \quad \mbox{on}\quad \Pi_{R}, \vspace{0.2cm}\\
  q^{\pm}_{n+2}=\frac{\partial q^{\pm}_{n+2}}{\partial \nu}=0, & \quad \mbox{on}\quad \Pi_{R}^{\pm}.
\end{array}\right.
\end{equation}
Then $q^{+}_{n+2}=q^{-}_{n+2}$.
\end{lemma}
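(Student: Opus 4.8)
\textbf{Proof plan for Lemma \ref{lem=}.}
The statement asks that two homogeneous polynomials $q^{\pm}_{n+2}$ of degree $n+2$ that solve the \emph{same} inhomogeneous Laplace equation $\Delta q^{\pm}_{n+2}=H_{n}$ in the two quarter–sectors $\Sigma_{R}^{\pm}$, match to first order on the common interface $\Pi_{R}$ (the positive $x_{2}$–axis), and vanish together with their normal derivatives on the outer rays $\Pi_{R}^{+}$ and $\Pi_{R}^{-}$, must in fact coincide. The natural route is to pass to the difference $w:=q^{+}_{n+2}-q^{-}_{n+2}$, extended by $q^{+}_{n+2}$ on $\Sigma_{R}^{+}$ and $q^{-}_{n+2}$ on $\Sigma_{R}^{-}$; here I would need to be slightly careful, because $w$ as defined is only a function on the upper half disk $B_{R}\cap\{x_{2}>0\}$, not a genuine globally defined polynomial. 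The key observation is that the transmission conditions on $\Pi_{R}$ say precisely that $w$ is $C^{1}$ across the positive $x_{2}$–axis, and since $\Delta q^{+}_{n+2}=\Delta q^{-}_{n+2}=H_{n}$ there, $w$ is actually harmonic in all of $B_{R}\cap\{x_{2}>0\}$ (the jump in $\Delta w$ across $\Pi_{R}$ vanishes, so no singular distributional Laplacian appears). Being a $C^{1}$ matching of two polynomials solving the same PDE with matching Cauchy data on an analytic arc, $w$ is in fact a single homogeneous harmonic polynomial of degree $n+2$ on the whole half-disk, hence on $\mathbb{R}^2$.

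Next I would exploit the boundary conditions on the outer rays. On $\Pi_{R}^{-}=\{\theta=0\}$ we have $q^{-}_{n+2}=\partial_{\nu}q^{-}_{n+2}=0$, i.e. $w$ and its normal derivative vanish on the ray $\theta=0$; likewise on $\Pi_{R}^{+}=\{\theta=\pi\}$, $w$ and $\partial_{\nu}w$ vanish. So $w$ is a homogeneous harmonic polynomial of degree $n+2$ that has vanishing Cauchy data on \emph{two} distinct rays through the origin (the whole $x_{1}$-axis, in fact). Writing $w(r,\theta)=r^{n+2}\bigl(A\cos[(n+2)\theta]+B\sin[(n+2)\theta]\bigr)$ — the only homogeneous harmonic functions of that degree — the four conditions $w(\theta=0)=0$, $\partial_{\theta}w(\theta=0)=0$ force $A=0$ and $(n+2)B=0$, hence $A=B=0$ and $w\equiv 0$. (One ray already suffices once we know $w$ is a single harmonic polynomial; the second ray is redundant but harmless.) Therefore $q^{+}_{n+2}=q^{-}_{n+2}$, as claimed. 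Equivalently, one can invoke Holmgren's uniqueness theorem directly: a harmonic function with zero Cauchy data on an analytic hypersurface vanishes in a neighbourhood, and being polynomial it vanishes identically.

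The step I expect to be the actual crux — the one deserving care rather than a one-line appeal — is the assertion that the glued function $w$ is genuinely harmonic (indeed polynomial) across $\Pi_{R}$ rather than merely piecewise harmonic with a $C^{1}$ matching. This is where the two transmission conditions $q^{+}_{n+2}=q^{-}_{n+2}$ and $\partial_{\nu}q^{+}_{n+2}=\partial_{\nu}q^{-}_{n+2}$ on $\Pi_{R}$ are both essential: the first gives continuity, the second removes the single-layer distributional contribution to $\Delta w$ along $\Pi_{R}$, so that $\Delta w=0$ holds in the sense of distributions on the full open half-disk; elliptic regularity (or simply Weyl's lemma) then upgrades $w$ to a classical harmonic function, and its homogeneity of degree $n+2$ together with analyticity pins it down as a homogeneous harmonic polynomial. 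Once that identification is in hand, the boundary-ray argument of the previous paragraph is elementary. I would also remark that this lemma is exactly what licenses, in the singularity expansions of Corollary \ref{Cor1} and Lemma \ref{A3}, treating the ``polynomial part'' $q^{\pm}_{k+2}$ as a single object across the interface rather than a genuinely two-sided one.
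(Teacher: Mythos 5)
Your argument has two genuine problems, one of interpretation and one of substance. The gluing/Holmgren step only goes through because you read the hypothesis as saying that $q^{+}_{n+2}$ and $q^{-}_{n+2}$ have the \emph{same} Laplacian $H_{n}$. Under that reading the lemma is indeed a one-liner: $v:=q^{+}_{n+2}-q^{-}_{n+2}$ is a globally defined polynomial with $\Delta v=0$ and vanishing Cauchy data on the segment $\Pi_{R}$, hence $v\equiv 0$, and the conditions on $\Pi_{R}^{\pm}$ are never needed. But that version of the lemma is useless for the way it is invoked in Section \ref{Shape}: there the two sides satisfy $\Delta Q_{n+2}^{\pm}=r^{n}\big[c_{n,a}^{\pm}\sin(n\theta)+c_{n,b}^{\pm}\cos(n\theta)\big]$ with \emph{a priori different} coefficients, and the whole point of applying the lemma is to conclude $Q^{+}_{n+2}=Q^{-}_{n+2}$ and hence that the two right-hand sides coincide (e.g.\ $c_{0}^{+}=c_{0}^{-}$ in Step 1). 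The paper's proof accordingly never uses $\Delta q^{+}_{n+2}=\Delta q^{-}_{n+2}$; it uses only that each right-hand side is harmonic (so $\Delta^{2}q^{\pm}_{n+2}=0$), expands $q^{\pm}_{n+2}=\sum_{j}a_{j}^{\pm}x_{1}^{n+2-j}x_{2}^{j}$, extracts $a_{0}^{\pm}=a_{1}^{\pm}=0$ and $a_{n+1}^{+}=a_{n+1}^{-}$, $a_{n+2}^{+}=a_{n+2}^{-}$ from the boundary and transmission conditions, and then shows that the remaining differences $a_{j}^{+}-a_{j}^{-}$ satisfy a homogeneous linear system whose matrix $D_{n-1}$ has nonzero determinant. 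Your route cannot reach that statement: when the two Laplacians differ, $q^{+}_{n+2}-q^{-}_{n+2}$ is no longer harmonic and uniqueness of the Cauchy problem across $\Pi_{R}$ gives nothing.

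Independently of this, your concluding paragraph is wrong on its own terms. The glued function $w$ satisfies $\Delta w=H_{n}$, not $\Delta w=0$, so it is \emph{not} of the form $r^{n+2}\bigl(A\cos[(n+2)\theta]+B\sin[(n+2)\theta]\bigr)$ --- that ansatz replaces the $(n+3)$-dimensional space of homogeneous polynomials of degree $n+2$ by its $2$-dimensional harmonic subspace --- and the conclusion $w\equiv 0$ is false. For instance $q^{+}_{2}=q^{-}_{2}=x_{2}^{2}$ satisfies every hypothesis of the lemma with $H_{0}=2$ (it vanishes to second order on $x_{2}=0$ and matches itself across $x_{1}=0$) and is certainly not zero; the lemma claims only $q^{+}_{n+2}=q^{-}_{n+2}$, and the paper's own proof records $q_{2}^{\pm}=\widetilde{a}_{2}x_{2}^{2}$ as the general solution for $n=0$. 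Since your argument, as written, proves a false statement, the error is structural rather than cosmetic, and the coefficient-by-coefficient analysis (or some substitute that genuinely uses biharmonicity and both outer rays) cannot be avoided.
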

\begin{proof}
Since $q_{n+2}^{\pm}$ is a homogeneous polynomial of order $n+2$, we can expand it into a convergent series in Cartesian coordinates:
\begin{equation*}
q_{n+2}^{\pm}=\sum\limits_{j=0}^{n+2}a_{j}^{\pm}x_{1}^{n+2-j}x_{2}^{j},\qquad n\geq0.
\end{equation*}
Below we shall prove that $a_j^+=a_j^-$ by using the transmission and boundary conditions in \eqref{3.7} together with the fact that $\Delta^2 q_{n+2}^\pm=\Delta H_n=0$.

In view of the transmission and boundary conditions,
\ben
&&q_{n+2}^{\pm}|_{x_{2}=0}=\frac{\partial q_{n+2}^{\pm}}{\partial x_{2}}\Big|_{x_{2}=0}=0,\\
&&q_{n+2}^{+}|_{x_{1}=0}=q_{n+2}^{-}|_{x_{1}=0}, \quad
\frac{\partial q_{n+2}^{+}}{\partial x_{1}}\Big|_{x_{1}=0}=\frac{\partial q_{n+2}^{-}}{\partial x_{1}}\Big|_{x_{1}=0},
\enn
we get $a_{0}^{\pm}=a_{1}^{\pm}=0$ and $a_{n+2}^{+}=a_{n+2}^{-}:=\widetilde{a}_{n+2}$, $a_{n+1}^{+}=a_{n+1}^{-}:=\widetilde{a}_{n+1}$. Hence,
\begin{equation*}
q_{n+2}^{\pm}=\sum_{j=2}^{n+2}a_{j}^{\pm}x_{1}^{n+2-j}x_{2}^{j}.
\end{equation*}

For $n=0$, we have $q_{2}^{+}=\widetilde{a}_{2}x_{2}^{2}=q_{2}^{-}$.

For $n=1$, we have $q_{3}^{+}=\widetilde{a}_{2}x_{1}x_{2}^{2}+\widetilde{a}_{3}x_{2}^{3}=q_{3}^{-}$.

For $n\geq2$, it is easy to see that
\begin{align*}
\Delta q_{n+2}^{\pm}=&\frac{\partial}{\partial x_{1}}\Big(\sum_{j=2}^{n+1}(n-j+2)a_{j}^{\pm}x_{1}^{n+1-j}x_{2}^{j}\Big) +\frac{\partial}{\partial x_{2}}\Big(\sum_{j=2}^{n+2}ja_{j}^{\pm}x_{1}^{n+2-j}x_{2}^{j-1}\Big) \\
=&\sum_{j=0}^{n}\big[(n-j+2)(n-j+1)a_{j}^{\pm}+(j+1)(j+2)a_{j+2}^{\pm}\big]x_{1}^{n-j}x_{2}^{j} \\
=&\sum_{j=0}^{n}b_{j}^{\pm}x_{1}^{n-j}x_{2}^{j},
 \end{align*}
where
\ben
b_{j}^{\pm}:=(n-j+2)(n-j+1)a_{j}^{\pm}+(j+1)(j+2)a_{j+2}^{\pm}.
\enn
Analogously,
\begin{equation*}
\Delta^{2}q_{n+2}^{\pm}=\sum_{j=0}^{n-2}d_{j}^{\pm}x_{1}^{n-2-j}x_{2}^{j}, \quad d_{j}^{\pm}:=(n-j)(n-j-1)b_{j}^{\pm}+(j+1)(j+2)b_{j+2}^{\pm}.
\end{equation*}
Since $\Delta^{2}q_{n+2}^{\pm}=0$, we have $d_{j}^{\pm}=0$ for $0\leq j\leq n-2$, which implies that
\begin{equation*}
\frac{(n-j-1)(n-j)a_{j+2}^{\pm}+(j+3)(j+4)a_{j+4}^{\pm}}{(n-j+1)(n-j+2)a_{j}^{\pm}+(j+1)(j+2)a_{j+2}^{\pm}} =\frac{b_{j+2}^{\pm}}{b_{j}^{\pm}}=-\frac{(n-j-1)(n-j)}{(j+1)(j+2)}.
\end{equation*}
Equivalently, we may rewrite the previously relation as
\ben
0&=&(j+4)(j+3)(j+2)(j+1) \;a_{j+4}+2 (j+2)(j+1)(n-j-1)(n-j)\; a_{j+2} \\
&&+ \,(n-j-1)(n-j)(n-j+1)(n-j+2)\; a_j,
\enn
where $a_j:=a_j^+-a_j^-$ for $0\leq j\leq n+2$. Since $a_0=a_1=0$ and $a_{n+1}=a_{n+2}=0$, the homogeneous linear system for $a_j$ ($2\leq j\leq n$) corresponds to the $(n-1)\times (n-1)$ matrix $D_{n-1}$:
\begin{equation*}
D_{n-1}=\left(\begin{array}{ccccccc}
\mathbb{B}_{0}(n) & 0 & \mathbb{C}_{0}(n) & 0 & \cdots & 0 & 0 \\
0 & \mathbb{B}_{1}(n) & 0 & \mathbb{C}_{1}(n) & \cdots & 0 & 0 \\
\mathbb{A}_{2}(n) & 0 & \mathbb{B}_{2}(n) & 0 & \cdots & 0 & 0 \\
0 & \mathbb{A}_{3}(n) & 0 & \mathbb{B}_{3}(n) & \cdots & 0 & 0 \\
\vdots & \vdots & \vdots & & \vdots \ddots & \vdots & \vdots \\
0 & 0 & 0 & 0 & \cdots & \mathbb{B}_{n-3}(n) & 0 \\
0 & 0 & 0 & 0 & \cdots & 0 & \mathbb{B}_{n-2}(n)
\end{array}\right)
\end{equation*}
where $\mathbb{A}_{j}(n):=(n-j-1)(n-j)(n-j+1)(n-j+2)$, $\mathbb{B}_{j}(n):=2 (j+2)(j+1)(n-j-1)(n-j)$ and $\mathbb{C}_{j}(n):=(j+4)(j+3)(j+2)(j+1)$.

For $n=2$, we have $\mathbb{B}_{0}(2)=8\neq0$; for $n=3$, we have $|D_{2}|=\mathbb{B}_{0}(3)\mathbb{B}_{1}(3)=24^{2}\neq0$; for $n\geq4$, we have
\begin{equation*}
|D_{n-1}|=\mathbb{B}_{0}(n)\mathbb{B}_{1}(n)\left(\mathbb{B}_{2}(n) -\frac{\mathbb{A}_{2}(n)}{\mathbb{B}_{0}(n)}\mathbb{C}_{0}(n)\right)\cdots\left(\mathbb{B}_{n-2}(n) -\frac{\mathbb{A}_{n-2}(n)}{\mathbb{B}_{n-4}(n)}\mathbb{C}_{n-4}(n)\right).
\end{equation*}
Note that $\mathbb{B}_{j}(n)\neq0$ ($0\leq j\leq n-2$, $n\geq4$). Since
\begin{align*}
&\mathbb{B}_{j}(n)\mathbb{B}_{j-2}(n)-\mathbb{A}_{j}(n)\mathbb{C}_{j-2}(n) \\
=&4(j+1)(j+2)(n-j-1)(n-j)(j-1)j(n-j+1)(n-j+2) \\
&-(n-j-1)(n-j)(n-j+1)(n-j+2)(j+2)(j+1)j(j-1) \\
=&3(j-1)j(j+1)(j+2)(n-j-1)(n-j)(n-j+1)(n-j+2)\\
\neq&0
\end{align*}
for any $2\leq j\leq n-2$,
we obtain that $|D_{n-1}|\neq0$. Consequently, there exists only one trivial solution to the homogeneous linear system for $a_j$ ($2\leq j\leq n$), that is $a_j=0$ ($2\leq j\leq n$).
Recalling the definition of $q_{n+2}^{\pm}$, we conclude that $q_{n+2}^{+}=q_{n+2}^{-}$. The proof is complete.
\end{proof}

Relying on the above preparations, we will prove the uniqueness result in Theorem \ref{Main}.  Firstly we prove $\Lambda_1=\Lambda_2$ in Section \ref{Shape} below, and then prove $k_{1,2}=k_{2,2}$ in Section \ref{medium}.

\section{Proof of Theorem \ref{Main}: determination of grating profiles} \label{Shape}

Since
\begin{equation*}
u_{1}(x_{1},b)=u_{2}(x_{1},b)\quad \mbox{ for all }x_{1}\in(0,2\pi),
\end{equation*}
we obtain that $u_{1}(x_{1},x_{2})=u_{2}(x_{1},x_{2})$ in $x_{2}>b$, and the unique continuation of solutions to the Helmholtz equation leads to
\begin{equation*}
u_{1}(x_{1},x_{2})=u_{2}(x_{1},x_{2})\quad \mbox{ for all }x\in \Omega_{\Lambda_{1}}^{+}\cap\Omega_{\Lambda_{2}}^{+}.
\end{equation*}
Assume on the contrary that $\Lambda_1\neq \Lambda_2$. Switching the notations for $\Lambda_1$ and $\Lambda_2$ if necessary, we consider the following  cases:
\begin{itemize}
  \item Case one: there exists a corner point $O$ of $\Lambda_{1}$ such that $O\in\Omega_{\Lambda_{2}}^{+}$ (see Figure \ref{fig1});

  \item Case two: all corners of $\Lambda_1$ and $\Lambda_2$ coincide but $\Lambda_1\neq \Lambda_2$ (see Figure \ref{fig2});

  \item Case three:  there exists a corner point $O$ of $\Lambda_{2}$ lying on $\Lambda_1$, but $O$ is not a corner of $\Lambda_1$ (see Figure \ref{fig3}).
\end{itemize}
Obviously, the first and last cases imply that the corners of $\Lambda_1$ and $\Lambda_2$ do not coincide completely. Using coordinate translation and rotation, we always suppose that the corner $O$ is located at the origin.

\subsection{Case one}\label{case-one}

\begin{figure}[h] \label{fig1}
 \begin{center}
  \includegraphics[width=7cm,height=3cm]{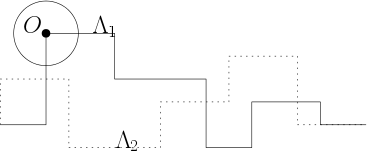}
  \end{center}
  \caption{Case one:  there exists a corner point $O$ of $\Lambda_{1}$ such that $O\in\Omega_{\Lambda_{2}}^{+}$.}
\end{figure}
Let $B_{R}$ denote a disk centered at the point $O$ with radius $R$ such that $B_{R} \subseteq \Omega_{\Lambda_{2}}^{+}$. Since this corner stays away from $\Lambda_{2}$ and belongs to $\Omega_{\Lambda_{2}}^{+}$, the function $u_{2}$ satisfies the Helmholtz equation with the wave number $k_{1}$ in $B_{R}$, while $u_{1}$ fulfills the Helmholtz equation with the variable potential $k_{1}^{2}(1+f_{1})$. Here, $f_{1}$ is a piecewise constant function defined by
\begin{equation*}
f_{1}:=\left\{\begin{array}{lll}
 0,& \quad \mbox{in}\quad B_{R}\cap\Omega_{\Lambda_{1}}^{+},  \vspace{0.2cm} \\
 (\frac{k_{1,2}}{k_{1}})^{2}-1,& \quad \mbox{in}\quad B_{R}\cap\Omega_{\Lambda_{1}}^{-}.
\end{array}\right.
\end{equation*}
Recalling the transmission conditions in (\ref{a}), we find that the pair $(u_{1},u_{2})$ is a solution to the following system:
\begin{equation*}
\left\{\begin{array}{lll}
\Delta u_{1}+k_{1}^{2}(1+f_{1})u_{1}=0,\quad &\mbox{in}\quad B_{R},\vspace{0.2cm}\\
\Delta u_{2}+k_{1}^{2}u_{2}=0,\quad &\mbox{in}\quad B_{R}, \vspace{0.2cm}\\
u_{1}=u_{2}, \quad &\mbox{on}\quad B_{R}\cap\Lambda_{1}, \vspace{0.2cm} \\
\frac{\partial u_{1}}{\partial\nu}=\frac{\partial u_{2}}{\partial\nu}, \quad &\mbox{on}\quad  B_{R}\cap\Lambda_{1}.
\end{array}\right.
\end{equation*}
Using Lemma \ref{Th2}, we obtain that $u_{1}=u_{2}\equiv0$ in $B_{R}$ and thus $u_{1}\equiv0$ in $\mathbb{R}^{2}$. To derive a contradiction we recall the Rayleigh expansion for $u_1$:
\begin{equation*}
u_{1}(x)=e^{i(\alpha x_{1}-\beta x_{2})}+\sum\limits_{n\in\mathbb{Z}}A_{n}^{+}e^{i(\alpha_{n}x_{1}+\beta_{n}^{+}x_{2})}, \qquad x_{2}\geq b
\end{equation*}
for some $b>\Lambda_1^+$. Taking $x_{2}=b$, we deduce from $u_1\equiv 0$ and $\alpha_{0}=\alpha$ that
\begin{equation*}
e^{i\alpha x_{1}}(e^{-i\beta b}+A_{0}^{+}e^{i\beta_{0}^{+}b}) +\sum\limits_{n\neq 0}A_{n}^{+}e^{i(n+\alpha)x_{1}}e^{i\beta_{n}^{+}b}=0,\quad \mbox{for all}\quad x_1\in \mathbb{R}.
\end{equation*}
Multiplying a term $e^{-i\alpha x_{1}}$ on both sides and integrating over $(0, 2\pi)$ with respect to $x_1$, we conclude that
\begin{equation*}
e^{-i\beta b}+A_{0}e^{i\beta b}=0,\quad A_{n}^{+}e^{i\beta_{n}^{+}b}=0,\quad\mbox{for all}\quad n\neq 0,
\end{equation*}
which yields $A_{0}=-e^{-2i\beta b}$ and $A_n=0$ if $n\neq 0$. Since $A_{0}\in\mathbb{C}$ is a constant, this is impossible for any $b>\Lambda_1^+$. This contradiction implies that $\Lambda_{1}=\Lambda_{2}$.

\subsection{Case two}
The corners of $\Lambda_{1}$ and $\Lambda_{2}$ coincide (see Figure \ref{fig2}), implying that $\Lambda_1$ and $\Lambda_2$ have the same height and also the same grooves but with different opening directions.
\begin{figure}[h] \label{fig2}
 \begin{center}
  \includegraphics[width=5.5cm,height=3cm]{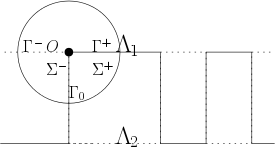}
  \end{center}
  \caption{Case two: corners of $\Lambda_1$ and $\Lambda_2$ are identical but $\Lambda_1\neq \Lambda_2$.}
\end{figure}

Choose a corner point $O\in\Lambda_{1}\cap\Lambda_{2}$ and $R>0$ sufficiently small such that the disk $B_{R}:=\{x\in \mathbb{R}^{2}:|x|<R\}$ does not contain other corners. Introduce the notations (see Figure \ref{fig2})
\begin{equation*}
B_{R}\cap \Lambda_{1}=\Gamma^{+}\cup\Gamma_{0},\quad B_{R}\cap \Lambda_{2}=\Gamma^{-}\cup\Gamma_{0},\quad \Sigma^{+}=B_{R}\cap \Omega_{\Lambda_{1}}^{-},\quad \Sigma^{-}=B_{R}\cap \Omega_{\Lambda_{2}}^{-}.
\end{equation*}
We can conclude that $u_{1},u_{2}\in H^{2}(B_{R})\cap C^{0,\delta}(B_{R})$ ($0<\delta<1$) fulfill the system
\begin{equation*}
\left\{\begin{array}{lll}
\Delta u_{1}+k_{1,2}^{2}u_{1}=0,\quad \Delta u_{2}+k_{1}^{2}u_{2}=0,\quad &\mbox{in}\quad \Sigma^{+}, \vspace{0.2cm} \\
\Delta u_{1}+k_{1}^{2}u_{1}=0,\quad  \Delta u_{2}+k_{2,2}^{2}u_{2}=0,\quad &\mbox{in}\quad \Sigma^{-}, \vspace{0.2cm}\\
\big[u_{1}\big]=\big[\frac{\partial u_{1}}{\partial\nu}\big]=0,\quad \big[u_{2}\big]=\big[\frac{\partial u_{2}}{\partial\nu}\big]=0, & \mbox{on}\quad \Gamma_{0}, \vspace{0.2cm}\\
u_{1}=u_{2}, \quad \frac{\partial u_{1}}{\partial\nu}=\frac{\partial u_{2}}{\partial\nu}, \quad & \mbox{on}\quad \Gamma^{+}\cup\Gamma^{-}.
\end{array}\right.
\end{equation*}
By Corollary \ref{Cor1}, we have
\begin{equation} \label{uj}
u_{j}(r,\theta)=\sum_{n\geq0}r^{n}\big[a_{n}^{(j)}\sin(n\theta)+b_{n}^{(j)}\cos(n\theta)\big]+\mathcal{O}(r^{2}),
\quad r\rightarrow0^{+}, ~j=1,2.
\end{equation}
Let $w=u_{1}-u_{2}$ in $\Sigma:=\Sigma^{+}\cup\Sigma^{-}\cup\Gamma_{0}$. Then we get a Cauchy problem for the Laplacian equation with an inhomogeneous source term:
\begin{equation}\label{inho}
\left\{\begin{array}{lll}
\Delta w=k_{1}^{2}u_{2}-k_{1,2}^{2}u_{1}:=f^{+},\quad &\mbox{in}\quad\Sigma^{+}, \vspace{0.2cm}\\
\Delta w=k_{2,2}^{2}u_{2}-k_{1}^{2}u_{1}:=f^{-},\quad &\mbox{in}\quad\Sigma^{-}, \vspace{0.2cm}\\
\big[w\big]=\big[\frac{\partial w}{\partial\nu}\big]=0,\quad &\mbox{on}\quad\Gamma_{0}, \vspace{0.2cm}\\
w=\frac{\partial w}{\partial\nu}=0, \quad &\mbox{on}\quad\Gamma^{+}\cup\Gamma^{-}.
\end{array}\right.
\end{equation}
Below we shall prove that the previous Cauchy problem is an overdetermined boundary value with the trivial solution only. We remark that the case $f^+=f^-$  has been considered in \cite{Elschner2018} where corner scattering theory in a convex domain has been discussed. Inspired by \cite{Elschner2018}, we need to study a special corner scattering problem with two right angles in this paper.
Our approach relies on the singularity analysis of the inhomogeneous Laplace equation with a piecewisely continuous right hand side in a semi-disk. We refer to the fundamental paper \cite{K1967} and the monographs \cite{KMR, MNP, NP} for a general regularity theory of elliptic boundary value problems in domains with non-smooth boundaries.

For clarity, we shall divide our proof in Case two into four steps.

{\bf Step 1:} Prove that $f^\pm(O)=0$ and $b_0^{(1)}=b_0^{(2)}=0$.

Since $f^\pm$ are H\"older continuous near $O$, we set $c_{0}^{\pm}:=f^{\pm}(O)$. Consider the Dirichlet and Neumann problems separately:
\begin{equation} \label{Dir}
\left\{\begin{array}{lll}
  \Delta v_{0,D}=c_{0}^{\pm},& \quad \mbox{in}~\Sigma^{\pm},\vspace{0.05cm}  \\
  \big[v_{0,D}\big]=\big[\frac{\partial v_{0,D}}{\partial \nu}\big]=0, & \quad \mbox{on} ~\Gamma_{0},\vspace{0.05cm} \\
  v_{0,D}=0, & \quad \mbox{on}~\Gamma^{+}\cup\Gamma^{-},
\end{array}\right.
\left\{\begin{array}{lll}
  \Delta v_{0,N}=c_{0}^{\pm},& \quad \mbox{in}~\Sigma^{\pm}, \vspace{0.05cm} \\
  \big[v_{0,N}\big]=\big[\frac{\partial v_{0,N}}{\partial \nu}\big]=0, & \quad \mbox{on}~\Gamma_{0}, \vspace{0.05cm}\\
  \frac{\partial v_{0,N}}{\partial\nu}=0, & \quad \mbox{on}~\Gamma^{+}\cup\Gamma^{-}.
\end{array}\right.
\end{equation}
where the right hand sides are given by the lowest order term of $f^\pm$.  By Lemma \ref{A3}, we know that there exist two special solutions to (\ref{Dir}) of the form
\ben
v_{0,D}&=&q^{\pm}_{2,D}(r,\theta)+C_{0,D}r^{2}\big[\ln r\sin(2\theta) +\theta\cos(2\theta)\big],\\
v_{0,N}&=&q^{\pm}_{2,N}(r,\theta)+C_{0,N}r^{2}\big[\ln r\cos(2\theta) -\theta\sin(2\theta)\big],
\enn
where $q^{\pm}_{2,D}(r,\theta)$ and $q^{\pm}_{2,N}(r,\theta)$ are homogeneous polynomials of degree two satisfying the system
\begin{equation*}
\left\{\begin{array}{lll}
  \Delta q^{\pm}_{2,D}=c_{0}^{\pm},& \quad \mbox{in}\quad \Sigma^{\pm}, \\
  q^{+}_{2,D}=q^{-}_{2,D},\quad & \quad \mbox{on}\quad \Gamma_{0}, \\
  \frac{\partial q^{+}_{2,D}}{\partial \nu}=\frac{\partial q^{-}_{2,D}}{\partial \nu}, &\quad\mbox{on}\quad\Gamma_{0},\vspace{0.05cm}\\
  q^{\pm}_{2,D}=0, & \quad \mbox{on}\quad \Gamma^{\pm}.
\end{array}\right. \quad \quad
\left\{\begin{array}{lll}
  \Delta q^{\pm}_{2,N}=c_{0}^{\pm},& \quad \mbox{in}\quad \Sigma^{\pm}, \\
  q^{+}_{2,N}=q^{-}_{2,N},& \quad \mbox{on}\quad \Gamma_{0}, \\
  \frac{\partial q^{+}_{2,N}}{\partial \nu}=\frac{\partial q^{-}_{2,N}}{\partial \nu},&\quad\mbox{on}\quad\Gamma_{0},\vspace{0.05cm} \\
  \frac{\partial q^{\pm}_{2,N}}{\partial \nu}=0, & \quad \mbox{on}\quad \Gamma^{\pm}.
\end{array}\right.
\end{equation*}

For $0<\delta<1$, $l\in\mathbb{N}$ and $\eta\in\mathbb{N}$, the weighted H\"older spaces $\Lambda^{l,\delta}_{\eta}(\Sigma)$ will be used to characterize the singularity of solutions to the transmission problem \eqref{inho} near $O$. The space $\Lambda^{l,\delta}_{\eta}(\Sigma)$ is endowed with the norm
\begin{equation*}
\|g\|_{\Lambda^{l,\delta}_{\eta}(\Sigma)}:=\sup_{x\in\Sigma}\bigg\{\sum^{l}_{j=0} |x|^{\eta-\delta-l+j}|\nabla^{j}g(x)|\bigg\}+\sup_{x,y\in\Sigma}\bigg\{ \frac{||x|^{\eta}\nabla^{l}g(x)-|y|^{\eta}\nabla^{l}g(y)|}{|x-y|^{\delta}} \bigg\}.
\end{equation*}
Obviously, the weight $\eta\in\mathbb{N}$ characterizes the singularity at $O$. For more properties of the weighted H\"older spaces $\Lambda^{l,\delta}_{\eta}(\Sigma)$, we refer to \cite[Section 2]{Hu2020} and \cite{NP}.

Set $w_{0,D}=w-v_{0,D}\in C^{0,\delta}(\overline{\Sigma})\subset\Lambda_{0}^{0,\delta} (\Sigma)$, where $w$ fulfills the system \eqref{inho}. Then $w_{0,D}$ solves
\begin{equation} \label{w_D}
\left\{\begin{array}{lll}
  \Delta w_{0,D}=\widetilde{f}_{0},& \quad \mbox{in}\quad \Sigma, \\
  \big[w_{0,D}\big]=\big[\frac{\partial w_{0,D}}{\partial \nu}\big]=0, & \quad \mbox{on} \quad \Gamma_{0}, \\
  w_{0,D}=0, & \quad \mbox{on}\quad \Gamma^{+}\cup\Gamma^{-},
\end{array}\right.
\end{equation}
where $\widetilde{f}_{0}:=f^{\pm}-c_{0}^{\pm}$ in $\Sigma^{\pm}$.  Since $\widetilde{f}_{0}(O)=0$, we have $\widetilde{f}_{0}\in  \Lambda_{0}^{0,\delta}(\Sigma)\cap \Lambda_{1}^{0,\delta}(\Sigma)$ for some $\delta\in(0,1)$. Making use of an appropriate cut-off function, the above problem can be formulated in an infinite sector, in which the Dirichlet boundary value problem is uniquely solvable in a corresponding weighted H\"older space $\Lambda^{2, \delta}_1$; see \cite{NP}.  This gives the solution
$w_{0, D}\in \Lambda_{1}^{2,\delta}(\Sigma)$ with the asymptotics (see also \cite[Proposition 4]{Elschner2018})
\begin{equation*}
w_{0,D}=d_{D,2}r^{2}\sin(2\theta)+\mathcal{O}\big(r^{2+\delta}\big),\quad r\rightarrow0^{+}.
\end{equation*}
Note that here we have used the fact the opening angle of $\Sigma$ is $\pi$. Hence, as $r\rightarrow0^{+}$ in $\Sigma^{\pm}$,
\begin{equation*}
w=w_{0,D}+v_{0,D}=d_{D,2}r^{2}\sin(2\theta)+\mathcal{O}\big(r^{2+\delta}\big)+q^{\pm}_{2,D} +C_{0,D}r^{2}\big[\ln r\sin(2\theta)+\theta\cos(2\theta)\big].
\end{equation*}
Below we shall prove that a solution with the above asymptotic behavior near $O$ cannot fulfill the homogeneous Neumann boundary condition. In fact,  one can prove analogously that, as a solution to the Neumann boundary value problem, $w$ admits the asymptotics
\begin{equation*}
w=w_{0,N}+v_{0,N}=d_{N,2}r^{2}\cos(2\theta)+\mathcal{O}\big(r^{2+\delta}\big)+q^{\pm}_{2,N} +C_{0,N}r^{2}\big[\ln r\cos(2\theta)-\theta\sin(2\theta)\big].
\end{equation*}
Comparing the coefficients of the previous two identities, we find that
\begin{equation*}
C_{0,D}=C_{0,N}=0 \quad \mbox{and}\quad Q^{\pm}_{2,D}=Q^{\pm}_{2,N}:=Q^{\pm}_{2} \quad \mbox{in}~\Sigma,
\end{equation*}
where $Q_{2,D}^{\pm}:=d_{D,2}r^{2}\sin(2\theta)+q^{\pm}_{2,D}$, $Q^{\pm}_{2,N}:=d_{N,2}r^{2}\cos(2\theta)+q^{\pm}_{2,N}$. Furthermore, $Q^{\pm}_{2}$ satisfies the following problem (cf. \eqref{3.7}):
\begin{equation*}
\left\{\begin{array}{lll}
  \Delta Q^{\pm}_{2}=c_{0}^{\pm},& \quad \mbox{in}\quad \Sigma, \\
  Q^{+}_{2}=Q^{-}_{2},\quad \frac{\partial Q^{+}_{2}}{\partial \nu}=\frac{\partial Q^{-}_{2}}{\partial \nu}, & \quad \mbox{on}\quad \Gamma_{0}, \\
  Q^{\pm}_{2}=\frac{\partial Q^{\pm}_{2}}{\partial \nu}=0, & \quad \mbox{on}\quad \Gamma^{\pm}.
\end{array}\right.
\end{equation*}

By Lemma \ref{lem=}, we can see that $c_{0}^{+}=c_{0}^{-}$. In the following, we will prove that $c_{0}^{+}=c_{0}^{-}=0$. Since $u_{1}(O)=u_{2}(O):=u(O)$, we have
\begin{equation*}
\left\{\begin{array}{lll}
c_{0}^{+}=f^{+}(O)=k_{1}^{2}u_{2}(O)-k_{1,2}^{2}u_{1}(O)=(k_{1}^{2}-k_{1,2}^{2})u(O), \\
c_{0}^{-}=f^{-}(O)=k_{2,2}^{2}u_{2}(O)-k_{1}^{2}u_{1}(O)=(k_{2, 2}^{2}-k_{1}^{2})u(O).
\end{array}\right.
\end{equation*}
By our assumptions on $k_{1,2}$ and $k_{2,2}$, we conclude that $c_0^+$ and $c_0^-$ have different signs if $u(O)\neq 0$.
Combining with the identity $c_{0}^{+}=c_{0}^{-}$, we obtain that $c_{0}^{+}=c_{0}^{-}=0$ and then $u(O)=0$.

Recalling the representation of the functions $u_{1}$ and $u_2$ in (\ref{uj}), we achieve that $b^{(1)}_{0}=b^{(2)}_{0}=0$ and thus as $r\rightarrow 0$,
\ben
&f^{+}(r,\theta)=k_{1}^{2}u_{2}-k_{1,2}^{2}u_{1}= r(c_{1,a}^{+}\sin\theta+c_{1,b}^{+}\cos\theta)+\mathcal{O}(r^{2}),\\
&f^{-}(r,\theta)=k_{2,2}^{2}u_{2}-k_{1}^{2}u_{1}=r(c_{1,a}^{-}\sin\theta+c_{1,b}^{-}\cos\theta)+\mathcal{O}(r^{2}),
\enn
where
\be\label{cab}
\begin{split}
&&c_{1,a}^{+}:=k_{1}^{2}\,a^{(2)}_{1}-k_{1,2}^{2}\,a^{(1)}_{1}, \quad
c_{1,b}^{+}:=k_{1}^{2}\,b^{(2)}_{1}-k_{1,2}^{2}\,b^{(1)}_{1}, \\
&&c_{1,a}^{-}:=k_{2,2}^{2}\,a^{(2)}_{1}-k_{1}^{2}\,a^{(1)}_{1},\quad
c_{1,b}^{-}:=k_{2,2}^{2}\,b^{(2)}_{1}-k_{1}^{2}\,b^{(1)}_{1}.
\end{split}
\en

{\bf Step 2:} Prove that $c^\pm_{1,a}=c^\pm_{1,b}=0$ and $a_1^{(j)}=b_1^{(j)}=0$ for $j=1,2$. This step is not necessary for carrying out our induction arguments in the next Step 3. However, for the readers' convenience we still keep it here.

As done in Step 1, we consider the Dirichlet and Neumann problems separately by replacing the right hand side by its lowest order term.
Consider the problems
\begin{equation} \label{Neu}
\left\{\begin{array}{lll}
  \Delta v_{1,D}=r(c_{1,a}^{\pm}\sin\theta+c_{1,b}^{\pm}\cos\theta),& \quad \mbox{in}\quad\Sigma^{\pm}, \\
  \big[v_{1,D}\big]=\big[\frac{\partial v_{1,D}}{\partial \nu}\big]=0, & \quad \mbox{on} \quad\Gamma_{0}, \\
  v_{1,D}=0, & \quad \mbox{on}\quad\Gamma^{+}\cup\Gamma^{-},
\end{array}\right.
\end{equation}
\begin{equation} \label{Neu2}
\left\{\begin{array}{lll}
  \Delta v_{1,N}=r(c_{1,a}^{\pm}\sin\theta+c_{1,b}^{\pm}\cos\theta),& \quad \mbox{in}\quad\Sigma^{\pm}, \\
  \big[v_{1,N}\big]=\big[\frac{\partial v_{1,N}}{\partial \nu}\big]=0, & \quad \mbox{on}\quad\Gamma_{0},\vspace{0.05cm} \\
  \frac{\partial v_{1,N}}{\partial\nu}=0, & \quad \mbox{on}\quad\Gamma^{+}\cup\Gamma^{-}.
\end{array}\right.
\end{equation}
By Lemma \ref{A3}, there exist two special solutions to (\ref{Neu}) and (\ref{Neu2}) of the form
\ben
&v_{1,D}=q^{\pm}_{3,D}(r,\theta)+C_{1,D}r^{3}\big[\ln r\sin(3\theta) +\theta\cos(3\theta)\big],\\
&v_{1,N}=q^{\pm}_{3,N}(r,\theta)+C_{1,N}r^{3}\big[\ln r\cos(3\theta) -\theta\sin(3\theta)\big],
\enn
where $q^{\pm}_{3,D}(r,\theta)$ and $q^{\pm}_{3,N}(r,\theta)$ are homogeneous polynomials of degree three satisfying the systems (\ref{Neu}) and (\ref{Neu2}), respectively. Then $w_{1,D}:=w-v_{1,D}$ solves the problem (\ref{w_D}) with the right term $\widetilde{f}_{1}:=f^{\pm}-r(c_{1,a}^{\pm}\sin\theta+c_{1,b}^{\pm}\cos\theta)$ in $\Sigma^{\pm}$. Since $\widetilde{f}_{1}(O)=|\nabla \widetilde{f}_{1}(O)|=0$, we can see that $\widetilde{f}_{1}\in \Lambda_{-1}^{0,\delta}(\Sigma)\cap \Lambda_{0}^{0,\delta}(\Sigma)$, which implies that $w_{1,D}\in\Lambda_{0}^{2,\delta}(\Sigma)$. Hence, $w_{1,D}$ takes the form
\begin{equation*}
w_{1,D}=d_{D,3}r^{3}\sin(3\theta)+\mathcal{O}\big(r^{3+\delta}\big),\quad r\rightarrow 0^{+},
\end{equation*}
and then
\begin{equation*}
w=w_{1,D}+v_{1,D}=d_{D,3}r^{3}\sin(3\theta)+\mathcal{O}\big(r^{3+\delta}\big)+q^{\pm}_{3,D} +C_{1,D}r^{3}\big[\ln r\sin(3\theta)+\theta\cos(3\theta)\big].
\end{equation*}
Similarly,
\begin{equation*}
w=w_{1,N}+v_{1,N}=d_{N,3}r^{3}\cos(3\theta)+\mathcal{O}\big(r^{3+\delta}\big)+q^{\pm}_{3,N} +C_{1,N}r^{3}\big[\ln r\cos(3\theta)-\theta\sin(3\theta)\big].
\end{equation*}
Comparing the coefficients of the above two identities, we find
\begin{equation*}
C_{1,D}=C_{1,N}=0 \quad \mbox{and} \quad Q^{\pm}_{3,D}=Q^{\pm}_{3,N}=:Q^{\pm}_{3}
\end{equation*}
where $Q^{\pm}_{3,D}:=d_{D,3}r^{3}\sin(3\theta)+q^{\pm}_{3,D}$, $Q^{\pm}_{3,N}:=d_{N,3}r^{3}\cos(3\theta)+q^{\pm}_{3,N}$ and $Q^{\pm}_{3}$ satisfies:
\begin{equation*}
\left\{\begin{array}{lll}
  \Delta Q^{\pm}_{3}=r(c_{1,a}^{\pm}\sin\theta+c_{1,b}^{\pm}\cos\theta),& \quad \mbox{in}\quad \Sigma^\pm, \\
  Q^{+}_{3}=Q^{-}_{3},\quad \frac{\partial Q^{+}_{3}}{\partial \nu}=\frac{\partial Q^{-}_{3}}{\partial \nu}, & \quad \mbox{on}\quad \Gamma_{0}, \\
  Q^{\pm}_{3}=\frac{\partial Q^{\pm}_{3}}{\partial \nu}=0, & \quad \mbox{on}\quad \Gamma^{\pm}.
\end{array}\right.
\end{equation*}
Using again Lemma \ref{lem=}, we obtain that $Q_{3}^{+}=Q_{3}^{-}$. Hence, $c_{1,a}^{+}\sin\theta+c_{1,b}^{+}\cos\theta=c_{1,a}^{-}\sin\theta+c_{1,b}^{-}\cos\theta$ for all $\theta\in(0, 2\pi)$, implying that $c_{1,a}^{+}=c_{1,a}^{-}$ and $c_{1,b}^{+}=c_{1,b}^{-}$.

Next, we will prove $c_{1,a}^{\pm}=c_{1,b}^{\pm}=0$. In view of the transmission conditions at $\theta=-\pi/2$ for all $r\in[0, R)$, we may set $\partial_{r}u_{1}(O) =\partial_{r}u_{2}(O)=:\partial_{r}u(O)$, $\partial_{\theta}\partial_{r}u_{1}(O) =\partial_{\theta}\partial_{r}u_{2}(O) =:\partial_{\theta}\partial_{r}u(O)$. In view of the definition of $c^\pm_{1,b}$ and $c^\pm_{1,a}$ we obtain
\begin{equation*}
\left\{\begin{array}{lll}
c_{1,b}^{+}=\partial_{r}f^{+}(O)=k_{1}^{2}\partial_{r}u_{2}(O)-k_{1,2}^{2}\partial_{r} u_{1}(O) =(k_{1}^{2}-k_{1,2}^{2})\partial_{r}u(O), \\
c_{1,b}^{-}=\partial_{r}f^{-}(O)=k_{2,2}^{2}\partial_{r}u_{2}(O)-k_{1}^{2}\partial_{r} u_{1}(O)=(k_{2,2}^{2}-k_{1}^{2})\partial_{r}u(O),\\
c_{1,a}^{+}=\partial_{\theta}\partial_{r}f^{+}(O)=k_{1}^{2}\partial_{\theta}\partial_{r}u_{2}(O)-k_{1,2}^{2}\partial_{\theta}\partial_{r} u_{1}(O)=(k_{1}^{2}-k_{1,2}^{2})\partial_{\theta}\partial_{r}u(O), \\
c_{1,a}^{-}=\partial_{\theta}\partial_{r}f^{-}(O)=k_{2,2}^{2}\partial_{\theta}\partial_{r}u_{2}(O)-k_{1}^{2}\partial_{\theta}\partial_{r} u_{1}(O)=(k_{2,2}^{2}-k_{1}^{2})\partial_{\theta}\partial_{r}u(O).
\end{array}\right.
\end{equation*}
Recalling the assumptions of $k_{1,2}$ and $k_{2,2}$, we find that $k_1^2-k_{1,2}^2$ and $k_{2,2}^2-k_1^2$ have different signs.
Combining with the identity $c_{1,b}^{+}=c_{1,b}^{-}$, $c_{1,a}^{+}=c_{1,a}^{-}$, we obtain that
\begin{equation*}
c_{1,a}^{\pm}=c_{1,b}^{\pm}=0, \quad \partial_r u(O)=\partial_\theta\partial_r u(O)=0,
\end{equation*}
which together with \eqref{cab} yield $a^{(j)}_{1}=b^{(j)}_{1}=0$ for $j=1,2$.

{\bf Step 3:} Induction arguments. Making the induction hypothesis that
\begin{equation*}
a^{(1)}_{j}=a^{(2)}_{j}=b^{(1)}_{j}=b^{(2)}_{j}=0\quad\mbox{for all}\quad 0\leq j\leq n-1, \;n\geq2,
\end{equation*}
we will prove that $a^{(1)}_{n}=a^{(2)}_{n}=b^{(1)}_{n}=b^{(2)}_{n}=0$.

The induction hypothesis implies that as $r\rightarrow 0$,
\begin{align*}
&f^{+}(r, \theta)=k_{1}^{2}u_{2}-k_{1,2}^{2}u_{1}= r^n\,\big[c_{n,a}^{+}\sin(n\theta)+c_{n,b}^{+}\cos(n\theta)\big]
+\mathcal{O}(r^{2+n}),\quad\mbox{in}\quad \Sigma^+, \\
&f^{-}(r, \theta)=k_{2,2}^{2}u_{2}-k_{1}^{2}u_{1}=r^n\,\big[c_{n,a}^{-}\sin(n\theta)+c_{n,b}^{-}\cos(n\theta)\big]
+\mathcal{O}(r^{2+n}),\quad\mbox{in}\quad \Sigma^-,
\end{align*}
where
\ben
&& c_{n,a}^{+}:=k_{1}^{2}a^{(2)}_{n}-k_{1,2}^{2}a^{(1)}_{n},
\quad c_{n,b}^{+}:=k_{1}^{2}b^{(2)}_{n}-k_{1,2}^{2}b^{(1)}_{n}, \\
&& c_{n,a}^{-}:=k_{2,2}^{2}a^{(2)}_{n}-k_{1}^{2}a^{(1)}_{n},\quad
 c_{n,b}^{-}:=k_{2,2}^{2}b^{(2)}_{n}-k_{1}^{2}b^{(1)}_{n}.
\enn
Consider the problems
\begin{equation} \label{eq}
\left\{\begin{array}{lll}
  \Delta v_{n,D}=r^{n}\big[c_{n,a}^{\pm}\sin(n\theta)+c_{n,b}^{\pm}\cos(n\theta)\big],& \quad \mbox{in}\quad \Sigma^{\pm}, \\
  \big[v_{n,D}\big]=\big[\frac{\partial v_{n,D}}{\partial \nu}\big]=0, & \quad \mbox{on}\quad \Gamma_{0}, \\
  v_{n,D}=0, & \quad \mbox{on}\quad \Gamma^{+}\cup\Gamma^{-},
\end{array}\right.
\end{equation}
\begin{equation} \label{eq2}
\left\{\begin{array}{lll}
  \Delta v_{n,N}=r^{n}\big[c_{n,a}^{\pm}\sin(n\theta)+c_{n,b}^{\pm}\cos(n\theta)\big],& \quad \mbox{in}\quad \Sigma^{\pm}, \\
  \big[v_{n,N}\big]=\big[\frac{\partial v_{n,N}}{\partial \nu}\big]=0, & \quad \mbox{on} \quad \Gamma_{0}, \\
  \frac{\partial v_{n,N}}{\partial\nu}=0, & \quad \mbox{on} \quad \Gamma^{+}\cup\Gamma^{-}.
\end{array}\right.
\end{equation}
Recalling Lemma \ref{A3}, there exist two special solutions to problems (\ref{eq}) and (\ref{eq2}) of the form
\ben
&&v_{n,D}(r,\theta)=q_{n+2,D}^{\pm}(r,\theta)+C_{n,D}r^{n+2}\big\{\ln r\sin[(n+2)\theta] +\theta\cos[(n+2)\theta]\big\} \quad \mbox{in}~\Sigma^{\pm},\\
&&v_{n,N}(r,\theta)=q_{n+2,N}^{\pm}(r,\theta)+C_{n,N}r^{n+2}\big\{\ln r\cos[(n+2)\theta] -\theta\sin[(n+2)\theta]\big\} \quad \mbox{in}~\Sigma^{\pm},
\enn
where $q^{\pm}_{n+2,D}$ and $q^{\pm}_{n+2,N}$ are homogeneous polynomials of degree $n+2$ satisfying the system (\ref{eq}) and (\ref{eq2}), respectively. The function $w_{n,D}:=w-v_{n,D}$ then solves the problem (\ref{w_D}) with the right term
\begin{equation*}
\widetilde{f}_{n}:=f^{\pm}-r^{n}\big[c_{n,a}^{\pm}\sin(n\theta)+c_{n,b}^{\pm}\cos(n\theta)\big],\quad \mbox{in}\quad \Sigma^{\pm}.
\end{equation*}
Since $\partial_{r}^{l}\widetilde{f}_{n}(O)=0$ for all $0\leq l\leq n$, it holds that $\widetilde{f}_{n}\in \Lambda_{-n}^{0,\delta}(\Sigma)\cap \Lambda_{-n+1}^{0,\delta}(\Sigma)$, which implies that $w_{n,D},w_{n,N}\in \Lambda_{-n+1}^{2,\delta}(\Sigma)$ take the forms
\ben
&&w_{n,D}=d_{D,n+2}r^{n+2}\sin[(n+2)\theta]+\mathcal{O}\big(r^{n+2+\delta}\big),\\
&&w_{n,N}=d_{N,n+2}r^{n+2}\cos[(n+2)\theta]+\mathcal{O}\big(r^{n+2+\delta}\big),
\enn
as $r\rightarrow0$. Consequently,
\begin{align*}
w=&d_{D,n+2}r^{n+2}\sin[(n+2)\theta]+\mathcal{O}\big(r^{n+2+\delta}\big)+q^{\pm}_{n+2,D}\\
&+C_{n,D}r^{n+2}\big\{\ln r\sin[(n+2)\theta]+\theta\cos[(n+2)\theta]\big\} \\
=&d_{N,n+2}r^{n+2}\cos[(n+2)\theta]+\mathcal{O}\big(r^{n+2+\delta}\big)+q^{\pm}_{n+2,N}\\ &+C_{n,N}r^{n+2}\big\{\ln r\cos[(n+2)\theta]-\theta\sin[(n+2)\theta]\big\}.
\end{align*}
This implies the relations
\begin{equation*}
C_{n,D}=C_{n,N}=0 \quad \mbox{and} \quad Q^{\pm}_{n+2,D}=Q^{\pm}_{n+2,N}=:Q^{\pm}_{n+2}
\end{equation*}
where $Q^{\pm}_{n+2,D}:=d_{D,n+2}r^{n+2}\sin[(n+2)\theta]+q^{\pm}_{n+2,D}$, $Q^{\pm}_{n+2,N}:=d_{N,n+2}r^{n+2}\cos[(n+2)\theta]+q^{\pm}_{n+2,N}$ and $Q^{\pm}_{n+2}$ satisfies
\begin{equation*}
\left\{\begin{array}{lll}
  \Delta Q^{\pm}_{n+2}=r^{n}\big[c_{n,a}^{\pm}\sin(n\theta)+c_{n,b}^{\pm}\cos(n\theta)\big],& \quad \mbox{in}\quad \Sigma,\vspace{0.05cm} \\
  Q^{+}_{n+2}=Q^{-}_{n+2},\quad \frac{\partial Q^{+}_{n+2}}{\partial \nu}=\frac{\partial Q^{-}_{n+2}}{\partial \nu}, & \quad \mbox{on}\quad \Gamma_{0}, \vspace{0.1cm} \\
  Q^{\pm}_{n+2}=\frac{\partial Q^{\pm}_{n+2}}{\partial \nu}=0, & \quad \mbox{on}\quad \Gamma^{\pm}.
\end{array}\right.
\end{equation*}
By Lemma \ref{lem=}, we conclude that $Q_{n+2}^{+}=Q_{n+2}^{-}$ and then $c_{n,a}^{+}=c_{n,a}^{-}$, $c_{n,b}^{+}=c_{n,b}^{-}$.

Since $\partial_{r}^{n}u_{1}(O)=\partial_{r}^{n}u_{2}(O):=\partial_{r}^{n}u(O)$ and $\partial_{\theta}\partial_{r}^{n}u_{1}(O)=\partial_{\theta}\partial_{r}^{n}u_{2}(O):=\partial_{\theta}\partial_{r}^{n}u(O)$, we have
\begin{equation*}
\left\{\begin{array}{lll}
c_{n,b}^{+}n!=\partial_{r}^{n}f_{n}^{+}(O)=k_{1}^{2}\partial_{r}^{n}u_{2}(O) -k_{1,2}^{2}\partial_{r}^{n}u_{1}(O)=(k_{1}^{2}-k_{1,2}^{2})\partial_{r}^{n}u(O), \\
c_{n,b}^{-}n!=\partial_{r}^{n}f_{n}^{-}(O)=k_{2,2}^{2}\partial_{r}^{n}u_{2}(O) -k_{1}^{2}\partial_{r}^{n}u_{1}(O)=(k_{2,2}^{2}-k_{1}^{2})\partial_{r}^{n}u(O),\\
c_{n,a}^{+}n!=\partial_{\theta}\partial_{r}^{n}f_{n}^{+}(O)=k_{1}^{2}\partial_{\theta}\partial_{r}^{n}u_{2}(O) -k_{1,2}^{2}\partial_{\theta}\partial_{r}^{n}u_{1}(O)=(k_{1}^{2}-k_{1,2}^{2})\partial_{\theta}\partial_{r}^{n}u(O), \\
c_{n,a}^{-}n!=\partial_{\theta}\partial_{r}^{n}f_{n}^{-}(O)=k_{2,2}^{2}\partial_{\theta}\partial_{r}^{n}u_{2}(O) -k_{1}^{2}\partial_{\theta}\partial_{r}^{n}u_{1}(O)=(k_{2,2}^{2}-k_{1}^{2})\partial_{\theta}\partial_{r}^{n}u(O).
\end{array}\right.
\end{equation*}
Again by the assumption of $k_{1,2}$ and $k_{2,2}$, we get
\begin{equation*}
c_{n,a}^{\pm}=c_{n,b}^{\pm}=0,\qquad \partial_r^n u(O)=\partial_\theta\partial_r^n u(O)=0,
\end{equation*}
which imply $a^{(j)}_{n}=b^{(j)}_{n}=0$ for $j=1,2$.

{\bf Step 4:} The final contradiction. The induction argument in the last step gives $a^{(j)}_{n}=b^{(j)}_{n}=0$ for $j=1,2$ and all $n\geq 0$. Using the second assertion of Corollary \ref{Cor1}, we deduce that $u_{1}=u_{2}\equiv0$ in $\Sigma$ and thus by unique continuation $u_{1}=u_{2}\equiv0$ in $\mathbb{R}^{2}$. Again using the arguments at the end of Case one, one can get a contradiction. This proves the coincidence of the grating files $\Lambda_{1}=\Lambda_{2}$ in Case two.

\vspace{5mm}

\subsection{Case three}
Assume there exists a corner $O$ of $\Lambda_2$ such that $O\in\Lambda_1$, but $O$ is not a corner point of $\Lambda_1$. Without loss of generality, we suppose that $O$ is located on a vertical line segment of $\Lambda_1$; see Figure \ref{fig3}.
\begin{figure}[h] \label{fig3}
 \begin{center}
  \includegraphics[width=5cm,height=3.5cm]{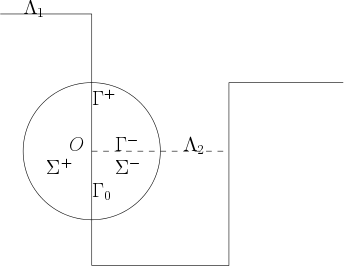}
  \end{center}
  \caption{Case three: $O\in\Lambda_1\cap\Lambda_2$ is a corner of $\Lambda_2$ but not a corner of $\Lambda_1$.}
\end{figure}

Choose $R>0$ sufficiently small such that the disk $B_{R}:=\{x\in \mathbb{R}^{2}:|x|<R\}$ does not contain other corners. Set
\begin{equation*}
B_{R}\cap \Lambda_{1}=\Gamma^{+}\cup\Gamma_{0},\quad B_{R}\cap \Lambda_{2}=\Gamma^{+}\cup\Gamma^{-},\quad \Sigma^{+}=B_{R}\cap \Omega_{\Lambda_{1}}^{-},\quad \Sigma^{-}=B_{R}\cap \Omega_{\Lambda_{2}}^{-}\cap \Omega_{\Lambda_{1}}^{+}.
\end{equation*}
We can see that $u_{1},u_{2}\in H^{2}(B_{R})\cap C^{0,\delta}(B_{R})$ ($0<\delta<1$) are solutions to the system
\begin{equation*}
\left\{\begin{array}{lll}
\Delta u_{1}+k_{1,2}^{2}u_{1}=0,\quad \Delta u_{2}+k_{2,2}^{2}u_{2}=0,\quad &\mbox{in}\quad \Sigma^{+}, \vspace{0.2cm} \\
\Delta u_{1}+k_{1}^{2}u_{1}=0,\quad  \Delta u_{2}+k_{2,2}^{2}u_{2}=0,\quad &\mbox{in}\quad \Sigma^{-}, \vspace{0.2cm}\\
\big[u_{1}\big]=\big[\frac{\partial u_{1}}{\partial\nu}\big]=0,\quad \big[u_{2}\big]=\big[\frac{\partial u_{2}}{\partial\nu}\big]=0, & \mbox{on}\quad \Gamma_{0}, \vspace{0.2cm}\\
u_{1}=u_{2}, \quad \frac{\partial u_{1}}{\partial\nu}=\frac{\partial u_{2}}{\partial\nu}, \quad & \mbox{on}\quad \Gamma^{+}\cup\Gamma^{-}.
\end{array}\right.
\end{equation*}
In contrast to Case two, the opening angle formed by $\Sigma^+\cup\Sigma^-\cup \Gamma_0$ is $3\pi /2$ rather than $\pi$. However, the arguments for treating Case two can be adapted to Case three. With slight modifications we can also deduce a contradiction. We omit the details for brevity. The proof of $\Lambda_1=\Lambda_2$ is thus complete.
\begin{remark}
If the near-field data are measured on two line segments above and below the grating, then we don't need to consider the Case three.
\end{remark}

\section{Proof of Theorem \ref{Main}: determination of refractive indices}
\label{medium}

Having uniquely determined the grating profies $\Lambda_1=\Lambda_2:=\Lambda$, we shall prove in this section that $k_{1,2}=k_{2,2}$. From $u_1(x_1, b)=u_2(x_1,b)$ for $x_1\in(0,2\pi)$, we get $u_1=u_2$ in $\Omega^+_\Lambda$. Choose a corner point $O\in\Lambda$ and $R>0$ sufficiently small, and set $\Pi=B_R\cap \Lambda$, $\Sigma^\pm=B_R\cap \Omega_\Lambda^\pm$. It is easy to see
\ben
&&\Delta u_1+k_{1,2}^2 u_1=0,\qquad \Delta u_2+k_{2,2}^2 u_2=0,\qquad\mbox{in}\quad \Sigma^-,\\
&&u_1=u_2,\quad \partial_\nu u_1=\partial_\nu u_2,\quad\mbox{on}\quad \Pi.
\enn
Note that the opening angle of $\Sigma^-$ is $\pi/2$ or $3\pi/2$.
Setting $w=u_1-u_2\in H^2(B_R)$, we get
\ben
&&\Delta w=f\quad\mbox{in}\quad \Sigma^-,\qquad f:=-k_{1,2}^2 u_1+k_{2,2}^2 u_2,\\
&&w=\partial_\nu w=0\quad\mbox{on}\quad \Pi.
\enn
Using the second assertion of Corollary \ref{Cor1}, we may assume that
\be\label{uj1}
u_j=\sum_{n\geq m}r^{n}\big[a_{n}^{(j)}\sin(n\theta)+b^{(j)}_{n}\cos(n\theta)\big]+\mathcal{O}(r^{m+2})\quad \mbox{as }r\rightarrow0^{+}, ~a^{(j)}_{n},b^{(j)}_{n}\in\mathbb{C},
\en
for some $m\geq0$ such that $|a^{(j)}_m|+|b^{(j)}_m|\neq 0$.
Otherwise, it holds that $u_1=u_2\equiv 0$ and a contradiction can be derived following the arguments at the end of Subsection \ref{case-one}.
We remark that, since $u_1=u_2$ in $\Sigma^+$, it holds in \eqref{uj1} that $a_m^{(1)}=a_m^{(2)}:=a_m$, $b_m^{(1)}=b_m^{(2)}:=b_m$ and that the index $m$ is uniform for $u_1$ and $u_2$. Hence, the right hand side admits the asymptotics
\ben
f(r, \theta)=r^m\big[c_m^+\,\sin(m\theta)+c_m^-\,\cos(m\theta)\big]
+\mathcal{O}(r^{m+2}), \quad r\rightarrow0, \quad \theta\in(0, 2\pi]
\enn with
\ben
c_m^+=-(k_{1,2}^2-k_{2,2}^2)\, a_{m},\qquad
c_m^-=-(k_{1,2}^2-k_{2,2}^2)\, b_{m}.
\enn
Since the lowest order term in the Taylor expansion of $f$ around $O$ is harmonic, applying \cite[Lemma 2.3]{Hu2020} gives the relation $c_m^\pm=0$. Since $|a_m|+|b_m|\neq 0$, we obtain $k_{1,2}=k_{2,2}$. The proof is complete.

\section{Appendix: well-posedness of forward scattering problem}\label{appendix}
In this section we prove well-posedness of our forward scattering problem under a more general transmission condition, which include both TE and TM polarizations.  The uniqueness proof seems new and of independent interests, since it applies to all frequencies, including  Rayleigh frequencies (which are also known as Wood anomalies), that is, $\beta_n^\pm =0$ for some $n\in \mathbb{Z}$.

For notational convenience we set $k_+=k_1$, $k_-=k_2$, $k(x)=k_\pm$ in $\Omega_\Lambda^\pm$. Consider the scattering problem
\begin{equation} \label{model}
\left\{\begin{array}{lll}
 \Delta u+k_{\pm}^{2}u=0,& \quad \mbox{in} \quad \Omega_{\Lambda}^{\pm}, \vspace{0.1cm} \\
 u^+=u^-, \quad \frac{\partial u^+}{\partial\nu}=\lambda\, \frac{\partial u^-}{\partial\nu},  & \quad \mbox{on}\quad \Lambda,\vspace{0.1cm} \\
 u=u^{i}+u^{s},& \quad \mbox{in} \quad \Omega_{\Lambda}^{+},
 \end{array}\right.
\end{equation}
where $\lambda>0$ is a constant, the notation $[\cdot]^\pm$ denotes the limit obtained from $\Omega_\Lambda^\pm$ and $\nu$ is the normal direction at $\Lambda$ pointing into $\Omega_\Lambda^+$.  The scattered field $u^s$ and the transmitted field $u$ are required to fulfill the upward and downward Rayleigh expansions \eqref{rad1} and \eqref{rad2}, respectively. We suppose that $\Lambda\in \mathcal{A}$ is a rectangular grating that satisfies the condition \eqref{gc}.  If $\Lambda$ is given by the graph of some function or $\mbox{Im}\, k_2>0$ (that is, the medium below $\Lambda$ is lossy), uniqueness and existence of the above transmission problem have been investigated in details; see e.g., \cite{Tilo-hab, D93, ES98, S98} in periodic structures and \cite{Hu2008, Thomas06} for rough interfaces.

\begin{theorem} Let $H>\max\{|\Lambda^+|, |\Lambda^-|\}$ and
suppose that one of the following conditions holds:
\[ (i)\; \lambda\geq 1,~k_+^2>\lambda\, k_-^2;\qquad (ii)\; \lambda\leq 1,~k_+^2<\lambda\, k_-^2. \]
Then the scattering problem \eqref{model} has a unique solution $u\in H_\alpha^1(S_H)$.\end{theorem}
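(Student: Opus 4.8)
The plan is to recast \eqref{model} as a variational problem in the slab $S_H$ and prove well-posedness through the Fredholm alternative, the real content being uniqueness. First I would introduce the two $\alpha$-quasiperiodic Dirichlet-to-Neumann operators $T^{\pm}$ on the artificial boundaries $\Gamma_{\pm H}:=\{x_2=\pm H\}$ dictated by the Rayleigh expansions \eqref{rad1}--\eqref{rad2}, namely $T^{+}\psi=\sum_{n}i\beta_n^{+}\widehat\psi_n e^{i\alpha_n x_1}$ and similarly for $T^{-}$. Multiplying the Helmholtz equations in $\Omega_{\Lambda}^{\pm}\cap S_H$ by a test function $\overline\varphi$, weighting the one in $\Omega_{\Lambda}^{-}$ by $\lambda$, and using $u^{+}=u^{-}$, $\partial_\nu u^{+}=\lambda\,\partial_\nu u^{-}$ on $\Lambda$ (so the interface terms cancel) together with $\alpha$-quasiperiodicity (so the vertical period sides cancel), one is led to the sesquilinear form
\[
B(u,\varphi)=\int_{\Omega_{\Lambda}^{+}\cap S_H}\!\!\big(\nabla u\cdot\nabla\overline\varphi-k_{+}^{2}u\overline\varphi\big)
+\lambda\!\int_{\Omega_{\Lambda}^{-}\cap S_H}\!\!\big(\nabla u\cdot\nabla\overline\varphi-k_{-}^{2}u\overline\varphi\big)
-\langle T^{+}u,\varphi\rangle_{\Gamma_H}-\lambda\langle T^{-}u,\varphi\rangle_{\Gamma_{-H}},
\]
with a bounded conjugate-linear right-hand side carrying the data $u^{i}$. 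Since the evanescent part of each $T^{\pm}$ is negative semidefinite and the propagating part has finite rank, $\mathrm{Re}\,B(u,u)\ge\min(1,\lambda)\,\|\nabla u\|_{L^{2}(S_H)}^{2}-C\,\|u\|_{L^{2}(S_H)}^{2}$, so $B$ is a compact perturbation of a coercive form and the Fredholm alternative reduces the theorem to showing that the homogeneous problem ($u^{i}\equiv0$) admits only $u\equiv0$.

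So let $u\in H_\alpha^{1}(S_H)$ solve \eqref{model} with $u^{i}\equiv0$. \emph{Step 1 (propagating modes).} Taking imaginary parts in the variational identity with $\varphi=u$, whose right-hand side is zero, the volume integrals drop out and the imaginary parts of $\langle T^{\pm}u,u\rangle$ leave only the propagating modes, yielding $\sum_{|\alpha_n|<k_{+}}\beta_n^{+}|A_n^{+}|^{2}+\lambda\sum_{|\alpha_n|<k_{-}}\beta_n^{-}|A_n^{-}|^{2}=0$. As $\lambda>0$ and $\beta_n^{\pm}>0$ for those indices, all these coefficients vanish; the surviving modes are evanescent ($|\alpha_n|>k_{\pm}$) or Rayleigh-frequency ($|\alpha_n|=k_{\pm}$, $\beta_n^{\pm}=0$).

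\emph{Step 2 (Rellich identity).} Applying the divergence-form Rellich identity for the vector field $a=(0,x_2-c)$ to $u$ in $\Omega_{\Lambda}^{\pm}\cap S_H$ over one period and adding the two with weights $1$ and $\lambda$ gives
\[
\int_{\Omega_{\Lambda}^{+}\cap S_H}\!\!\big(|\partial_1 u|^{2}-|\partial_2 u|^{2}-k_{+}^{2}|u|^{2}\big)
+\lambda\!\int_{\Omega_{\Lambda}^{-}\cap S_H}\!\!\big(|\partial_1 u|^{2}-|\partial_2 u|^{2}-k_{-}^{2}|u|^{2}\big)
=I_{\Gamma_H}+I_{\Gamma_{-H}}+I_{\Lambda}.
\]
On $\Gamma_{\pm H}$ a single outgoing Rayleigh mode $u_n$ contributes $|\partial_1 u_n|^{2}-|\partial_2 u_n|^{2}-k_{\pm}^{2}|u_n|^{2}=-2(\beta_n^{\pm})^{2}|A_n^{\pm}|^{2}$, which is $0$ after Step 1 (evanescent and Rayleigh-frequency modes already contribute $0$ pointwise), so $I_{\Gamma_{\pm H}}=0$. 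Decomposing $\nabla u$ on $\Lambda$ into its normal and tangential parts and inserting $u^{+}=u^{-}$, $\partial_\nu u^{+}=\lambda\,\partial_\nu u^{-}$, a direct computation gives
\[
I_{\Lambda}=\int_{\Lambda}(x_2-c)\,\nu_2\Big[\lambda(\lambda-1)\,|\partial_\nu u^{-}|^{2}+(\lambda-1)\,|\partial_\tau u|^{2}+(k_{+}^{2}-\lambda k_{-}^{2})\,|u|^{2}\Big]\,ds.
\]
Under hypothesis~(i) all three coefficients in the bracket are $\ge0$, and the choice $c<\Lambda^{-}$ makes $(x_2-c)\nu_2\ge0$ because $\nu_2\ge0$ by \eqref{gc}; under~(ii) all three are $\le0$ and the choice $c>\Lambda^{+}$ again yields $I_{\Lambda}\ge0$. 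On the other hand, taking real parts in the variational identity with $\varphi=u$ gives $\int_{\Omega_{\Lambda}^{+}\cap S_H}(|\nabla u|^{2}-k_{+}^{2}|u|^{2})+\lambda\int_{\Omega_{\Lambda}^{-}\cap S_H}(|\nabla u|^{2}-k_{-}^{2}|u|^{2})=-\Theta_H$ with $\Theta_H\ge0$ collecting the evanescent contributions on $\Gamma_{\pm H}$. Using $|\partial_1 u|^{2}-|\partial_2 u|^{2}=|\nabla u|^{2}-2|\partial_2 u|^{2}$, the two displayed relations combine into
\[
I_{\Lambda}+\Theta_H+2\Big(\int_{\Omega_{\Lambda}^{+}\cap S_H}|\partial_2 u|^{2}+\lambda\int_{\Omega_{\Lambda}^{-}\cap S_H}|\partial_2 u|^{2}\Big)=0,
\]
so all three nonnegative terms vanish; in particular $\partial_2 u\equiv0$ in $\Omega_{\Lambda}^{\pm}$. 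Then \eqref{gc} forces $u|_{\Omega_{\Lambda}^{+}}$ to depend on $x_1$ only; solving the resulting one-dimensional Helmholtz equation with $\alpha$-quasiperiodicity shows it is a trigonometric polynomial supported on the frequencies $\alpha_n$ with $|\alpha_n|=k_{+}$, and likewise on $\Omega_{\Lambda}^{-}$ with $|\alpha_n|=k_{-}$. Since $\Lambda\in\mathcal{A}$ obeying \eqref{gc} necessarily contains a horizontal segment, these two polynomials agree on an interval; as $k_{+}\neq k_{-}$ (forced by (i) or by (ii)) they have disjoint frequency supports, hence both vanish, so $u\equiv0$ in $\Omega_{\Lambda}^{+}\cup\Omega_{\Lambda}^{-}$. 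By the Fredholm alternative existence follows, and the solution is unique.

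The steps I expect to be delicate, in order, are: (a) the interface computation of $I_{\Lambda}$ — obtaining the coefficients $\lambda(\lambda-1)$, $\lambda-1$, $k_{+}^{2}-\lambda k_{-}^{2}$ exactly, since this is precisely where the transmission factor $\lambda$, the hypotheses (i)--(ii) and the sign condition $\nu_2\ge0$ all come together; (b) the Rayleigh-frequency (Wood anomaly) case, where the classical far-field/energy argument is inconclusive and which is handled here only through the stronger conclusion $\partial_2 u\equiv0$ together with the geometry of $\Lambda$; and (c) justifying the integrations by parts in the Rellich identity near the corners of $\Lambda$, where $u$ is merely $H^{1+s}$ — I would excise small discs around the corner points and check that the circular boundary integrals vanish as the radius tends to zero.
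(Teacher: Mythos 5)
Your proposal is correct and follows essentially the same route as the paper's appendix: the $\lambda$-weighted variational formulation with periodic DtN maps, Fredholm reduction, imaginary part to kill propagating modes, and the Rellich identity with multiplier $(x_2-c)\partial_2\overline{u}$ producing exactly the same interface bracket $\lambda(\lambda-1)|\partial_\nu u^-|^2+(\lambda-1)|\partial_\tau u^-|^2+(k_+^2-\lambda k_-^2)|u|^2$, the sign of which is controlled by the choice of $c$ and $\nu_2\geq 0$. The only cosmetic difference is the last step at Wood anomalies (you invoke linear independence of the exponentials $e^{\pm ik_\pm x_1}$ on a horizontal segment, while the paper reads off $u=0$ there from the vanishing of the $(k_+^2-\lambda k_-^2)|u|^2$ term), and your flagged concern about corner regularity is resolved in the paper by citing $u\in H^{3/2+\epsilon}_\alpha(S_H)$ for rectangular interfaces.
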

\begin{proof}
Introduce the notations
\ben
S_H^\pm=\{x\in \Omega_\Lambda^\pm: -H<x_2<H\},\qquad \Gamma_H^\pm=\{(x_1, \pm H): 0<x_1<2\pi\}.
\enn
Define the DtN mappings $T^\pm: H_\alpha^{1/2}(\Gamma_H^\pm)\rightarrow H_\alpha^{-1/2}(\Gamma_H^\pm)$ by
\ben
(T^\pm f)(x_1):=\pm\sum_{n\in \mathbb{Z}} i\,\beta_n^\pm f_n\,e^{i\alpha_n x_1},\qquad f(x_1)=\sum_{n\in \mathbb{Z}} f_n\,e^{i\alpha_n x_1}\in H_\alpha^{1/2}(\Gamma_H^\pm).
\enn One may deduce from the above definitions that
\be\label{re}
&&\mbox{Re}\,\langle \pm T^\pm\,f, f\rangle=-\sum_{ |\alpha_n|> k_\pm} |\beta_n^\pm|\, |f_n|^2\leq 0,\\ \label{im}
&&\mbox{Im}\,\langle \pm T^\pm\,f, f\rangle=\sum_{ |\alpha_n|\leq k_\pm} |\beta_n^\pm|\, |f_n|^2\geq 0,
\en
where the pair $\langle \cdot, \cdot\rangle$ denotes the duality between $H_\alpha^{-1/2}$ and $H_\alpha^{1/2}$ on $\Gamma_H^\pm$.
Define a piecewise constant function $a(x):=1$ in $S_H^+$ and $a(x):=\lambda$ in $S_H^-$. The variational formulation for the scattering problem can be written as: find $u\in H_\alpha^1(S_H)$ such that for all $v\in H_\alpha^1(S_H)$,
\be\nonumber
&&\int_{S_H} \left[a(x) \nabla u\cdot \nabla \overline{v}- a(x) k(x) u\overline{v}\right]\,\mbox{d}x-\int_{\Gamma_H^+} T^+u \overline{v}\,\mbox{d}s+\lambda\,\int_{\Gamma_H^-} T^-u \overline{v}\,\mbox{d}s\\ \label{va}
&=&\int_{\Gamma_H^+}\left(T^+ u^i-\frac{\partial u^i}{\partial x_2}\right)\overline{v}\,\mbox{d}s.
\en
Using $\eqref{re}$, one can easily prove that the above sesquilinear form is strongly elliptic (see e.g., \cite{Tilo-hab, D93, ES98, S98}), giving rise to a Fredholm operator with index zero over $H_\alpha^{1/2}(S_H)$. By Fredholm alternative, it suffices to prove uniqueness. Suppose that $u^i\equiv 0$. Then $u$ satisfies the upward and downward Rayleigh expansion radiation conditions. Taking the imaginary part on both sides of \eqref{va} with $v=u$ and using \eqref{im}, we get
\ben
0=-\sum_{ |\alpha_n|\leq k_+} |\beta_n^+|\, |A^+_n|^2
-\lambda\,\sum_{ |\alpha_n|\leq k_-} |\beta_n^-|\, |A^-_n|^2,
\enn which implies the vanishing of the Rayleigh coefficients $A_n^\pm=0$ for $|\alpha_n|< k_\pm$. Taking the real part on both sides of \eqref{va} with $v=u$ and $u^i=0$ and using \eqref{re}, we obtain
\ben
I_1&:=&\int_{S_H} \left[a(x) |\nabla u|^2- a(x) k^2(x) |u|^2\right]\,\mbox{d}x\\
&=&\mbox{Re}\left\{\int_{\Gamma_H^+} T^+u \overline{u}\,\mbox{d}s- \lambda\,\int_{\Gamma_H^-} T^-u \overline{u}\,\mbox{d}s\right\} \\
&=& -\sum_{|\alpha_n|> k_+} |\beta_n^+|\, |A_n^+|^2\, e^{-2|\beta_n^+|\,H}-\lambda \sum_{|\alpha_n|> k_-} |\beta_n^-|\, |A_n^-|^2\, e^{-2|\beta_n^-|\,H}\\
&\leq& 0.
\enn
Multiplying the Helmholtz equation by $(x_2-c)\partial_2\overline{u}$ and integrating by part yield the Rellich's identities (\cite{Tilo-hab, CM, Hu2008, Thomas06}):
\be\nonumber
0&=&\left(\int_{\Gamma_H^\pm}\mp\int_\Lambda \right) (x_2-c)\left[
-\nu_2 |\nabla u^\pm |^2+\nu_2 k_\pm^2 |u|^2+2\mbox{Re}(\partial_2 \overline{u}^\pm\,\partial_\nu u^\pm)
\right]\,\mbox{d}s \\ \nonumber
&&+\int_{S_H^\pm} |\nabla u|^2-k_\pm^2\,|u|^2-2|\partial_2 u|^2\, \mbox{d}x\\ \nonumber
&:=&I^\pm,
\en
where the normal directions at $\Gamma_H^\pm$ are supposed to point into the exterior of $S_H$.
We remark that the integrals on the vertical boundaries of $\partial S_H$ have been canceled due the quasi-periodicity of $u$.
 The integrand over $\Lambda$ is well-defined because,  for rectangular gratings it holds that $u\in H^{3/2+\epsilon}_\alpha(S_H)$ for some $\epsilon>0$ depending on $\lambda$ (see e.g., \cite[Chapter 2.4.3]{Pe2001} and \cite[Section 3.3]{ES98} ).
Straightforward calculations show that
\ben
&&\int_{\Gamma_H^\pm} (x_2-c)\left[
-\nu_2 |\nabla u^\pm |^2+\nu_2 k_\pm^2 |u|^2+2\mbox{Re}(\partial_2 \overline{u}^\pm\,\partial_\nu u^\pm)
\right]\,\mbox{d}s \\
&=&(\pm H-c)\,\sum_{ |\alpha_n|\leq k_\pm} |\beta_n^\pm|\, |A^\pm_n|^2
= 0,
\enn
and (see e.g., \cite[Section 4]{Hu2008} and \cite[Chapter 2.4]{Tilo-hab} for details)
\be\nonumber
0&=&I^++\lambda\, I^-\\\nonumber
&=&-\int_\Lambda \left[ \lambda (\lambda-1)|\partial_\nu u^-|^2+(\lambda-1)|\partial_\tau u^-|^2+(k_+^2-\lambda k_-^2) |u|^2 \right] \nu_2 (x_2-c)\,\mbox{d}s\\ \label{identity}
&&-2\int_{S_H}a(x) |\partial_2 u|^2\,\mbox{d}x+ I_1,
\en where $\partial_\tau$ denotes the tangential derivative on $\Lambda$ with $\tau:=(-\nu_2, \nu_1)$.
By the assumptions on $k_\pm$, $\lambda$ and recalling the fact that $\nu_2\geq 0$ on $\Lambda$, we can always choose $c\in \mathbb{R}$ to ensure that the integral over $\Lambda$ is non-positive, so that each term in the above expression vanishes. Consequently, we get $\partial_2 u\equiv 0$ in $S_H$ and $I_1=0$, implying that $A_n^\pm=0$ for all $|\alpha_n|>k_\pm$. Therefore,
\ben
u=A_n^\pm e^{ik_\pm x_1}+ B_m^\pm\,e^{-ik_\pm x_1}\quad\mbox{in}\quad \Omega_\Lambda^\pm,\qquad A_n^\pm, B_m^\pm\in \mathbb{C},
\enn if $\alpha_n=k_\pm$ or $\alpha_m=-k_\pm$ for some $n,m\in \mathbb{Z}$ (that is, Rayleigh frequencies occurs). Note that the above expression of $u$ is well-defined in $\mathbb{R}^2$.
Since $\nu_2=1$ on the line segment of $\Lambda$ parallel to the $x_1$-axis and $|k_+^2-\lambda k_-^2|>0$, one can also deduce from \eqref{identity} that $u \equiv 0$ on this segment, which gives $A_n^\pm=B_m^\pm=0$ and thus $u\equiv 0$.
\end{proof}
In the special case that $\lambda=1$ (i.e., TE polarization), we get well-posedness of our scattering problem \eqref{a}-\eqref{rad2}.

\begin{corollary}
Let $\Lambda\in \mathcal{A}$ be a rectangular penetrable grating and  assume $k_2\in \mathbb{R}$, $k_2\neq k_1$.
The direct scattering problem \eqref{a}-\eqref{rad2} has a unique solution  $u\in H_\alpha^2(S_H)$ for any fixed $H>\max\{|\Lambda^+|, |\Lambda^-|\}$.
\end{corollary}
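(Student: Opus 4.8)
The plan is to reduce the statement to the preceding theorem by specializing $\lambda=1$, and then to upgrade the regularity from $H_\alpha^1(S_H)$ to $H_\alpha^2(S_H)$ by a routine interior estimate. First I would note that when $\lambda=1$ the generalized transmission conditions in \eqref{model} reduce exactly to $[u]=[\partial_\nu u]=0$, so that \eqref{model} coincides with the TE problem \eqref{a}--\eqref{rad2}. Since $k_1,k_2>0$ with $k_2\neq k_1$, precisely one of the two alternatives in the theorem holds with $\lambda=1$: either $k_1>k_2$, which is case $(i)$ (namely $\lambda\geq 1$ and $k_+^2>\lambda k_-^2$), or $k_1<k_2$, which is case $(ii)$. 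In either case the theorem provides a unique $u\in H_\alpha^1(S_H)$ solving \eqref{a}--\eqref{rad2}.

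It then remains to show $u\in H_\alpha^2(S_H)$. With $\lambda=1$ the coefficient $a(x)$ in the sesquilinear form \eqref{va} is identically $1$, so testing against $v\in C_c^\infty(S_H)$ shows that $u$ is an $H_\alpha^1$ weak solution of $\Delta u+k(x)^2u=0$ in $S_H$, where $k(x)=k_\pm$ in $\Omega_\Lambda^\pm$ is piecewise constant. Hence $\Delta u=-k(x)^2u\in L^2(S_H)$, and since the principal part of the operator is the constant-coefficient Laplacian --- the interface $\Lambda$ enters only through the zeroth-order coefficient --- interior elliptic regularity yields $u\in H^2_{loc}(S_H)$, with no loss of smoothness at the (possibly reentrant) corners of $\Lambda$. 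Near the artificial boundaries $\Gamma_H^\pm$ the solution equals the Rayleigh expansions \eqref{rad1}--\eqref{rad2}, which are uniformly convergent series of exponentials and therefore real-analytic up to $\Gamma_H^\pm$. Covering one periodicity cell by interior patches together with a neighbourhood of $\Gamma_H^\pm$, and invoking the $\alpha$-quasiperiodicity in $x_1$, I conclude $u\in H_\alpha^2(S_H)$.

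The only genuinely delicate point is this last $H^2$ assertion: a priori one might fear the usual corner singularities of transmission problems at the vertices of $\Lambda$, which would destroy $H^2$ regularity in the reentrant case. The key observation that rules this out is that at $\lambda=1$ there is no jump in the coefficients of the principal part, so locally near a corner the equation is merely $-\Delta u=f$ with $f\in L^2$, and the geometry of $\Lambda$ is irrelevant for interior $H^2$ regularity. Beyond this, I would verify --- as is implicit in the construction of the DtN maps $T^\pm$ and the variational form \eqref{va} --- that the weak solution realizes the transmission conditions on $\Lambda$ and the upward and downward Rayleigh radiation conditions in the classical sense.
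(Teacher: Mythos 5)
Your proposal is correct and follows essentially the same route as the paper, which states the corollary as an immediate consequence of the preceding theorem with $\lambda=1$ (the hypothesis $k_2\neq k_1$ placing one in case $(i)$ or $(ii)$) and relies, as you do, on the fact that for $\lambda=1$ the principal part has no coefficient jump, so $\Delta u=-k(x)^2u\in L^2$ gives the $H^2$ upgrade by interior elliptic regularity — the same observation the paper uses in the proof of Corollary~\ref{Cor1}. Your explicit treatment of the regularity near $\Gamma_H^{\pm}$ via the Rayleigh expansions is a detail the paper leaves implicit, but it is consistent with its argument.
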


\section{Concluding remarks}

In this paper, we have verified the uniqueness in identifying a penetrable rectangular grating profile and the material parameter from a single measurement taken above the grating. We remark that, since only local regularity properties of the Helmholtz equation are involved, the uniqueness results carry over to any incoming wave, provided the forward problem is solvable in appropriate Sobolev spaces. Further, the uniqueness remain valid if $k_2\in\mathbb{C}$ and ${\rm Im}\,k_2\geq0$. On the other hand, we observe that the $2\pi$-periodicity assumption on the scattering surface can be removed. For non-periodic rectangular interfaces satisfying \eqref{gc}, well-posedness of the forward scattering can be established following the variational arguments in \cite{CM, Hu2008,Thomas06} for treating rough surfaces.  In addition, our arguments provide insights into the corner scattering theory in a non-convex domain.  The TE transmission conditions lead to  a good solution regularity that $u\in H^2(S_H)$, which however cannot hold true in the TM polarization case.
 In the future, we will discuss the inverse problem under the more general transmission boundary condition such as $\partial u_{+}/\partial\nu=\lambda\partial u_{-}/\partial\nu$ ($\lambda\neq1$) (which covers the TE polarization case when $\lambda=(k_-/k_+)^2$) and also consider the complex-valued refractive index function. Further efforts will be made to extend the uniqueness results to these scattering problems.

\vspace{0.3cm}
{\bf Acknowledgments.}

The work of G. Hu is supported by the National Natural Science Foundation of China (No. 12071236) and the Fundamental Research Funds for Central Universities in China (No. 63213025). The authors would like to thank J. Elschner for helpful discussions on the inhomogeneous Laplacian equation \eqref{inho}.


\begin{thebibliography}{<num>}
\bibitem{AbNe} T. Abboud and J. C. Nedelec. Electromagnetic waves in an inhomogeneous medium. J Math Anal Appl., 164 (1992):40-58.

\bibitem{Tilo-hab} T. Arens. Scattering by Bi-periodic Layered Media: The Integral Equation Approach, Habilitationsschrift, Karlsruhe, 2010.

\bibitem{Bao1994} G. Bao. A uniqueness theorem for an inverse problem in periodic diffractive optics. Inverse Probl., 10 (1994):335-340.
\bibitem{Bao2001} G. Bao, L. Cowsar and W. Masters. Mathematical Modeling in Optical Science. Philadelphia, USA: SIAM, 2001.

\bibitem{BL} G. Bao and P. Li. Maxwell's Equations in Periodic Structures, Springer, Singapore, 2022.

\bibitem{Bao2014} G. Bao, H. Zhang and J. Zou. Unique determination of periodic polyhedral structures by scattered electromagnetic fields II: The resonance case. Trans. Amer. Math. Soc., 366 (2014):1333-1361.


\bibitem{BS} A. S. Bonnet-Bendhia and F. Starling. Guided waves by electromagnetic gratings and non-uniqueness examples for the diffraction problem. Math. Methods Appl. Sci., 17 (1994):305-338.

\bibitem{BPJ} E. Bl\"asten, L. P\"aiv\"arinta and J. Sylvester. Corners always scatter. Commun. Math. Phys., 331 (2014):725-753.


\bibitem{CM} S. N. Chandler-Wilde and P. Monk. Existence, uniqueness, and variational methods for scattering by
  unbounded rough surfaces. SIAM J. Math. Anal., 37 (2005):598-618.

\bibitem{CF} X. Chen and A. Friedman. Maxwell's equations in a periodic structure. Trans. AMS, 323 (1991):465-507.

\bibitem{D93} D. C. Dobson, Optimal design of periodic antireflective structures for the Helmholtz equation, European J. Appl. Math., 4 (1993): 321-340.

\bibitem{DF92} D. Dobson and A. Friedman. The time-harmonic Maxwell equations in a doubly periodic structure. J. Math. Anal. Appl., 166 (1992):507-528.

\bibitem{Elschner2018} J. Elschner and G. Hu. Acoustic scattering from corners, edges and circular cones. Archive for Rational Mechanics and Analysis, 228 (2018):653-690.

\bibitem{ElschnerJ2015} J. Elschner and G. Hu. Corners and edges always scatter. Inverse Probl., 31 (2015):015003.

\bibitem{Elschner2010} J. Elschner and G. Hu. Global uniqueness in determining polygonal periodic structures with a minimal number of incident plane waves. Inverse Probl., 26 (2010):115002.

\bibitem{Elschner2003} J. Elschner, G. Schmidt and M. Yamamoto. Global uniqueness in determining rectangular periodic structures by scattering data with a single wave number. Journal of Inverse and Ill-Posed Problems, 11 (2003): 235-244.
\bibitem{Elschner2004} J. Elschner and M. Yamamoto. Uniqueness results for an inverse periodic transmission problem. Inverse Probl., 20 (2004):1841-1852.

\bibitem{ES98} J. Elschner and G. Schmidt. Diffraction in periodic structures and optimal design of binary gratings. I. Direct problems and gradient formulas. Math. Methods Appl. Sci., 21 (1998):1297-1342.

\bibitem{Hettlich1997} F. Hettlich and A. Kirsch. Schiffer's theorem in inverse scattering for periodic structures. Inverse Probl., 13 (1997): 351-361.
\bibitem{Hu2020} G. Hu and J. Li. Inverse source problems in an inhomogeneous medium with a single far-field pattern. SIAM Journal on Mathematical Analysis, 52 (2020): 5213-5231.
\bibitem{Hu2008} G. Hu, X. Liu, F. Qu and B. Zhang. Variational approach to scattering by unbounded rough surfaces with Neumann and generalized impedance boundary conditions. Communications in Mathematical Sciences, 13 (2015): 511-537.
\bibitem{Kirsch1993} A. Kirsch. Diffraction by periodic structures. In'Proc. Lapland Conf. Inverse Problems' (ed. L. Paivarinta et al), Springer, Berlin, Lecture Notes in Phys., 422 (1993):87-102.
\bibitem{Kirsch1994} A. Kirsch. Uniqueness theorems in inverse scattering theory for periodic structures. Inverse Probl., 10 (1994):145-152.

\bibitem{K1967} V. A. Kondratiev. Boundary value problems for elliptic equations in domains with conical or angular points. Trans. Moscow Math. Soc., 16 (1967): 227-313.

\bibitem{KMR} V. A. Kozlov, V. G. Maz'ya and J. Rossmann. Elliptic Boundary Value Problems in Domains with Point Singularities, American Mathematical Society, Providence, RI, 1997.

\bibitem{KS}S. Kusiak and J. Sylvester. The scattering support, Communications on Pure and Applied Mathematics, 56  (2003): 1525-1548.

\bibitem{LHY} L. Li, G. Hu and J. Yang, Piecewise-analytic interfaces with weakly singular points of arbitrary order always scatter, arXiv: 2010.00748v2.

\bibitem{Ray1907} J. W. S. Lord Rayleigh. On the dynamical theory of gratings. Proc. Roy. Soc. Lond. A, 79 (1907): 399-416.

\bibitem{MNP} V. G. Maz'ya, S. A. Nazarov and B. A. Plamenevskii. Asymptotic Theory of Elliptic Boundary Value Problems in Singularly Perturbed Domains I, Birkh auser-Verlag, Basel, 2000.

\bibitem{NP} S. A. Nazarov and B. A. Plamenevsky. Elliptic Problems in Domains with Piecewise Smooth Boundaries, Walter de Gruyter, Berlin, 1994.


\bibitem{PSV17} L. P\"aiv\"arinta, M. Salo and E. V. Vesalainen. Strictly convex corners scatter. Rev. Mat. Iberoam., 33 (2017): 1369-1396

\bibitem{Petit1980} R. Petit. Electromagnetic Theory of Gratings (Topics in Current Physics vol 22). (Heidelberg: Springer), 1980.


\bibitem{Pe2001} M. Petzoldt. Regularity and error estimators for elliptic problems with discontinuous coefficients. PhD Thesis. Berlin: Free University, 2001. Available online at:  http://www.diss.fu-berlin.de/diss


\bibitem{SK97} B. Schnabel and E. B. Kley.  Fabrication and application of subwavelength gratings, Proc. SPIE, 3008 (1997):  233-241.

\bibitem{Strycharz1999} B. Strycharz. Uniqueness in the inverse transmission scattering problem for periodic media. Math. Methods Appl. Sci., 22 (1999):753-772.

\bibitem{S98} B. Strycharz.  An acoustic scattering problem for periodic, inhomogeneous media, Math. Meth. in the
Appl. Sci. 21 (1998):  969-983.



\bibitem{Thomas06} M. Thomas. Analysis of Rough Surface Scattering Problems, PhD thesis, University of Reading, 2006.

\bibitem{Turunen1997} J. Turunen and F. Wyrowski. Diffractive Optics for Industrial and Commercial Applications, Berlin: Akademie, 1997.

\bibitem{Wilcox1984} C.H. Wilcox. Scattering Theory for Diffraction Gratings. Lecture Notes in Mathematics, Springer, Berlin 1984.
\bibitem{Yang2012} J. Yang and B. Zhang. Uniqueness results in the inverse scattering problem for periodic structures. Mathematical Methods in the Applied Sciences, 35 (2012):828-838.

\end{thebibliography}
\end{document}